\documentclass[a4paper,12pt]{article}
\usepackage{latexsym,amsmath,amsthm,amssymb}
\usepackage{a4wide}
\usepackage{hyperref}
\usepackage{marginnote}
\usepackage{color}
\usepackage{cite}
\hypersetup{
pdftitle={Maximizers with constraints}   
pdfauthor={Van Hoang Nguyen},
colorlinks = true,
linkcolor = magenta,
citecolor = blue,
}

\theoremstyle{plain}
\newtheorem{theorem}{Theorem}[section]



\newtheorem{proposition}[theorem]{Proposition}
\newtheorem{lemma}[theorem]{Lemma}

\theoremstyle{definition}

\theoremstyle{remark}



\renewcommand{\thefootnote}{\arabic{footnote}}

\def\R{\mathbb R}


\def\al{\alpha}
\def\om{\omega}
\def\ga{\gamma}
\def\De{\Delta} 

\def\lam{\lambda}
\def\vphi{\varphi}
\def\ep{\epsilon}
\def\na{\nabla}
\def\lt{\left}
\def\rt{\right}


\numberwithin{equation}{section}


\title{Maximizers for the variational problems associated with Sobolev type inequalities under constraints}
\author{Van Hoang Nguyen\footnote{Institute of Research and Development, Duy Tan University, Da Nang, Vietnam,}
\footnote{Institut de Math\'ematiques de Toulouse, Universit\'e Paul Sabatier, 118 Route de Narbonne, 31062 Toulouse c\'edex 09, France.}
}

\begin{document}
\maketitle


\renewcommand{\thefootnote}{}

\footnote{Email: \href{mailto: Van Hoang Nguyen <van-hoang.nguyen@math.univ-toulouse.fr>}{van-hoang.nguyen@math.univ-toulouse.fr} and \href{mailto: Van Hoang Nguyen <vanhoang0610@yahoo.com>}{vanhoang0610@yahoo.com}.}

\footnote{2010 \emph{Mathematics Subject Classification\text}: 46E35, 26D10.}

\footnote{\emph{Key words and phrases\text}: Sobolev type embedding, variational problems, maximizers}

\renewcommand{\thefootnote}{\arabic{footnote}}
\setcounter{footnote}{0}

\begin{abstract}
We propose a new approach to study the existence and non-existence of maximizers for the variational problems associated with Sobolev type inequalities both in the subcritical case and critical case under the equivalent constraints. The method is based on an useful link between the attainability of the supremum in our variational problems and the attainablity of the supremum of some special functions defined on $(0,\infty)$. Our approach can be applied to the same problems related to the fractional Laplacian operators. Our main results are new in the critical case and in the setting of the fractional Laplacian operator which was left open in the work of Ishiwata and Wadade \cite{Ishiwata,Ishiwata1}. In the subcritical case, our approach provides a new and elementary proof of the results of Ishiwata and Wadade \cite{Ishiwata,Ishiwata1}.  
\end{abstract}

\section{Introduction}
Let $N \geq 2$ and $p \in (1,N]$. We denote by $p^*$ the critical exponent in the Sobolev embedding theorem, i.e., $p^* = Np/(N-p)$ if $p< N$ and $p^* =\infty$ if $p=N$. Let $\ga >0$ and $u \in W^{1,p}(\R^N)$, we denote
\[
\|u\|_{W^{1,p}_\ga} = \lt(\|\na u\|_p^\ga + \| u\|_p^\ga\rt)^{\frac1\ga},
\]
and $S_{N,p,\ga} = \{u \in W^{1,p}(\R^N): \|u\|_{W^{1,p}_\ga} =1\}$.

Given $\al >0$, let us consider the variational problem
\begin{equation}\label{eq:variationalproblem}
D_{N,p,\ga,\al,q} = \sup_{u\in S_{N,p,\ga}} \lt(\|u\|_p^p + \al \|u\|_q^q\rt) = \sup_{u \in S_{N,p,\ga}} J_{N,p,\ga,\al,q}(u),
\end{equation}
where $p < q \leq p^*$ if $p < N$, and $N < q < \infty$ if $p =N$. Such a problem was recently studied by Ishiwata and Wadade in the subcritical case $p \in (1,N]$ and $q \in (N,p^*)$ (see \cite{Ishiwata,Ishiwata1}). It has close relation with the well known Sobolev type inequality
\[
\|u\|_q^p \leq S (\|\na u\|_p^p + \|u\|_p^p)\qquad u\in W^{1,p}(\R^N)
\]
where $p \in (1,N]$ and $q \in (p,p^*]$ if $p < N$ and $q\in (N,\infty)$ if $p=N$, and $S$ is the best constant depending only on $N,p$ and $q$. The attainability of $S$ is well understood in literature. In the limit case $p =N$, we known that $W^{1,N}(\R^N) \not\hookrightarrow L^\infty(\R^N)$. In this case, the so-called Moser--Trudinger inequality is a perfect replacement. The Moser--Trudinger inequality in bounded domains was independently proved by Yudovi${\rm \check{c}}$ \cite{Y1961}, Poho${\rm \check{z}}$aev \cite{P1965} and Trudinger \cite{T1967}. It was then sharpened by Moser \cite{M1970} by finding the sharp exponent. The Moser--Trudinger inequality was generalized to unbounded domains by Adachi and Tanaka \cite{Adachi}, by Cao \cite{Cao} for the scaling invariant form in the subcritical case and by Ruf \cite{Ruf2005} and by Li and Ruf \cite{LiRuf2008} for the full $L^N$-norm form including the critical case. It takes the following form
\begin{equation}\label{eq:MTLiRuf}
\sup_{u\in W^{1,N}(\R^N),\, \int_{\R^N}(|\na u|^N + |u|^N) dx\leq 1} \int_{\R^N}\Phi_N(\al |u|^{\frac N{N-1}}) dx < \infty,
\end{equation}
for any $\al \leq \al_N:= N \om_{N-1}^{1/(N-1)}$, where $\om_{N-1}$ denotes the surface area of the unit sphere in $\R^N$, and $\Phi_N(t) = e^t -\sum_{k=0}^{N-2} \frac{t^k}{k!}$. We refer the interest reader to \cite{AD2004,AS07,AY10,doO2015,Nguyen2017} and references therein for the recent developments in the study of the Moser--Trudinger inequality. The existence of maximizer for the Moser--Trudinger inequality in bounded domains was proved by Carleson and Chang \cite{CC1986} for the unit ball, by Struwe \cite{Struwe} for domains near ball, by Flucher \cite{Flucher1992} for any domain in dimension two, and by Lin \cite{Lin1996} for any domain in $\R^N, N\geq 3$. The existence of maximizers for the Moser--Trudinger inequality in whole space $\R^N$ was proved by Ruf \cite{Ruf2005} in the dimension two with $\al =\al_2$, by Li and Ruf \cite{LiRuf2008} in any dimension $N$ with $\al =\al_N$, and by Ishiwata \cite{Ishi} for any $\al \in (0,\al_N)$ with $N \geq 3$ and $\al \in (2/B_2, \al_2]$ with $N=2$ where $B_2 = \sup\limits_{u\in W^{1,2}(\R^2) \setminus\{0\}}\frac{\|u\|_4^4}{\|u\|_2^2 \|\na u\|_2^2}$. Ishiwata also proved a non-existence result in dimension two when $\al >0 $ sufficiently small which is different with the bounded domain case. The proof
of Ishiwata relies on the careful analysis using the concentration--compactness type argument together with the behavior of the functional
\[
J_{N,\al}(u) = \|u\|_N^N + \frac{\al}{N} \|u\|_{\frac{N^2}{N-1}}^{\frac{N^2}{N-1}}
\]
which appears as two first terms of the $\int_{\R^N} \Phi_N(\al |u|^{\frac N{N-1}}) dx$. Note that $J_{N,\al}$ is exact our functional $J_{N,N,N,\al, \frac{N^2}{N-1}}$ above. The non-existence result even occurs in the higher dimension if we replace the condition $\|u\|_N^N + \|\na u\|_N^N =1$ by the condition $\|u\|_N^a + \|\na u\|_N^b =1$ with $a, b> 0$ (see \cite{doO16,LamLuZhang}). This non-existence result suggests that the attainability of the Moser--Trudinger inequality depends delicately on the choice of normalizing conditions even if conditions are equivalent. This leads to the study of the problem \ref{eq:variationalproblem}.


Concerning to the problem \eqref{eq:variationalproblem}, it was shown in \cite{Ishiwata,Ishiwata1} that the existence of maximizers for $D_{N,p,\ga,\al,q}$ is closed related to the exponent $\ga$. Especially, the following quantity
\begin{equation}\label{eq:auxiliaryprob}
\al_{N,p,q}(\ga) = \inf_{u\in S_{N,p,\ga}} \frac{1 -\|u\|_p^p}{\|u\|_q^q} = \inf_{u\in S_{N,p,\ga}} I_{N,p,\ga,q}(u),
\end{equation}
is the threshold for the attainability of $D_{N,p,\ga,\al,q}$. For $N \geq 2$, $p\in (1,N]$ and $q \in (p,p^*)$, let $B_{N,p,q}$ denote the best constant in the Gagliardo--Nirenberg--Sobolev inequality
\begin{equation}\label{eq:GNSinequality}
B_{N,p,q} =\sup_{u\in W^{1,p}(\R^N), \, u\not\equiv 0} \frac{\|u\|_q^q}{\|\na u\|_p^{\ga_c} \|u\|_p^{q -\ga_c}},\qquad \ga_c = \frac{N(q-p)}p.
\end{equation} 
A simple variational argument shows that $B_{N,p,q}$ is attained in $W^{1,p}(\R^N)$. Moreover, by a refined P\'olya--Szeg\"o principle due to Brothers and Ziemer \cite{Brothers}, all maximizers for $B_{N,p,q}$ is uniquely determined up to a translation, dilation and multiple by non-zero constant by a nonnegative, spherically symmetric and non-increasing function $v_* \in W^{1,p}(\R^N)$, i.e., if $u$ is maximizer for $B_{N,p,q}$ then $u(x) = c \lam^{1/q} v_*(\lam^{1/N}(x-x_0))$ a.e. in $\R^N$ for some $c\not=0$, $\lam >0$ and $x_0 \in \R^N$.

Among their results, Ishiwata and Wadade proved the following theorem on the effect of $\al$ and $\ga$ to the attainability of $D_{N,p,\ga,\al,q}$.
\begin{theorem}\label{IshiwataWadade}[Ishiwata and Wadade] 
Let $N \geq 2$, $p\in (1,N]$, $q \in (p,p^*)$, and $\al, \ga >0$. Then we have
\begin{description}
\item (1) If $\ga > \ga_c$ then $\al_{N,p,q}(\ga) =0$ and $D_{N,p,\ga,\al,q}$ is attained for any $\al >0$.
\item (2) If $\ga =\ga_c$ then $\al_{N,p,q}(\ga_c) = p/(B_{N,p,q} \ga_c)$ and $D_{N,p,\ga,\al,q}$ is attained for any $\al > \al_{N,p,q}(\ga_c)$ while it is not attained for any $\al \leq \al_{N,p,q}(\ga_c)$.
\item (3) If $\ga < \ga_c$ then $D_{N,p,\ga,\al,q}$ is attained for any $\al \geq \al_{N,p,q}(\ga)$ while it is not attained for any $\al < \al_{N,p,q}(\ga)$.
\end{description}
\end{theorem}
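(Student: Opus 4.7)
The key idea is to reduce the variational problem \eqref{eq:variationalproblem} to the maximization of an explicit single-variable function. For $u\in S_{N,p,\ga}$, write $s:=\|u\|_p^{\ga}\in(0,1)$; then $\|u\|_p^p=s^{p/\ga}$, $\|\na u\|_p^p=(1-s)^{p/\ga}$, and \eqref{eq:GNSinequality} yields $\|u\|_q^q\le B_{N,p,q}(1-s)^{\ga_c/\ga}s^{(q-\ga_c)/\ga}$, with equality realized by a suitable dilation $c\,v_*(\lam\cdot)$ of the GNS extremal. For any prescribed $s\in(0,1)$ there is a unique pair $(c,\lam)$ with $c,\lam>0$ such that $u=c\,v_*(\lam\cdot)$ lies in $S_{N,p,\ga}$ with $\|u\|_p^\ga=s$, and for this $u$ the GNS inequality is saturated. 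Hence
\[
D_{N,p,\ga,\al,q}=\sup_{s\in(0,1)}\Phi_\al(s),\qquad \Phi_\al(s):=s^{p/\ga}+\al B_{N,p,q}(1-s)^{\ga_c/\ga}s^{(q-\ga_c)/\ga},
\]
and $D_{N,p,\ga,\al,q}$ is attained in $S_{N,p,\ga}$ iff $\sup\Phi_\al$ is attained at some interior point of $(0,1)$. The same reduction applied to \eqref{eq:auxiliaryprob} gives $\al_{N,p,q}(\ga)=\inf_{s\in(0,1)}\psi(s)$ with $\psi(s):=(1-s^{p/\ga})/[B_{N,p,q}(1-s)^{\ga_c/\ga}s^{(q-\ga_c)/\ga}]$, and the equivalence $\Phi_\al(s)\ge 1\iff \al\ge\psi(s)$ translates the whole theorem into a study of the single-variable function $\psi$ on $(0,1)$.

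Since $\Phi_\al$ is continuous on $[0,1]$ with $\Phi_\al(0)=0<1=\Phi_\al(1)$, attainment at an interior point is equivalent to the existence of $s\in(0,1)$ with $\Phi_\al(s)\ge 1$, i.e.\ $\al\ge\psi(s)$. We have $\psi(s)\to\infty$ as $s\to 0$ (because $q>\ga_c$), while a Taylor expansion at $s=1$ gives $\psi(s)\sim (p/\ga) B_{N,p,q}^{-1}(1-s)^{1-\ga_c/\ga}$. In case~(1), $\ga>\ga_c$ forces $\psi\to 0$ as $s\to 1$, hence $\al_{N,p,q}(\ga)=0$; for every $\al>0$ the supremum strictly exceeds $1$ and is attained at an interior point by continuity. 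In case~(3), $\ga<\ga_c$ makes $\psi$ blow up at both endpoints, so its positive infimum is realized at some $s_0\in(0,1)$; this gives $\Phi_{\al_{N,p,q}(\ga)}(s_0)=1$, yielding attainment for every $\al\ge\al_{N,p,q}(\ga)$, whereas for $\al<\al_{N,p,q}(\ga)$ one has $\Phi_\al<1$ on $(0,1)$ with limit $1$ at $s=1$, so no interior maximum exists.

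Case~(2) is the delicate borderline: here $\ga_c/\ga=1$ and $\lim_{s\to 1^-}\psi(s)=p/(B_{N,p,q}\ga_c)$, so the Taylor argument identifies only the asymptotic value and not the global infimum. The principal task is to prove that this limit is the global infimum of $\psi$ on $(0,1)$ and is \emph{not} attained there, i.e.\ the scalar inequality
\[
s^{p/\ga_c}+\tfrac{p}{\ga_c}(1-s)s^{(q-\ga_c)/\ga_c}<1\qquad\text{for every }s\in(0,1),
\]
with equality only in the limit $s\to 1$. This elementary but non-trivial one-variable estimate, to be proven by examining the sign of an appropriate derivative, is the main technical obstacle; once it is in hand, non-attainment for $\al\le\al_{N,p,q}(\ga_c)=p/(B_{N,p,q}\ga_c)$ and attainment for $\al>\al_{N,p,q}(\ga_c)$ follow exactly as in the other two cases.
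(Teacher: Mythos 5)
Your reduction is, up to the change of variable $s=1/(1+t)$ with $t=\|\na u\|_p^\ga/\|u\|_p^\ga$, exactly the paper's Theorem \ref{explicitformula} together with Lemma \ref{subrelation}: your $\Phi_\al(s)$ is the paper's $f_{N,p,\ga,\al,q}(t)$ and your $\psi(s)$ is $g_{N,p,\ga,q}(t)/B_{N,p,q}$, and your equivalence ``$D$ attained $\iff$ $\Phi_\al\ge 1$ somewhere in $(0,1)$'' is the same mechanism the paper uses (there phrased as comparing $f$ with its limit $1$ as $t\to 0$). The arguments for cases (1) and (3), and for $\al<\al_{N,p,q}(\ga)$ in general, are correct and match the paper's.

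The one genuine gap is that you do not prove the scalar inequality on which all of case (2) rests, namely $s^{p/\ga_c}+\tfrac{p}{\ga_c}(1-s)\,s^{(q-\ga_c)/\ga_c}<1$ for $s\in(0,1)$; you only announce that it should follow from ``the sign of an appropriate derivative.'' Without it you have neither the identification $\al_{N,p,q}(\ga_c)=p/(B_{N,p,q}\ga_c)$ (you only know the limit of $\psi$ at $s=1$, not that it is the infimum) nor the non-attainment at $\al=\al_{N,p,q}(\ga_c)$. The step is true and easy to close, and the paper's route is the cleanest: in the $t$-variable the claim is $h(t)>h(0)+h'(0)t$ for $t>0$, where $h(t)=(1+t)^{q/\ga_c}-(1+t)^{(q-p)/\ga_c}$; since $q/\ga_c>1$ and $(q-p)/\ga_c=p/N\le 1$, $h$ is a strictly convex function plus a convex one, hence strictly convex, and the inequality follows. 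You should insert this (or an equivalent one-variable argument in $s$) to complete the proof.
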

The proof of Theorem \ref{IshiwataWadade} given in \cite{Ishiwata,Ishiwata1} is mainly based on the rearrangement argument (which enables us to study the problem \eqref{eq:variationalproblem} only on radial functions) and the subcriticality of the problem \eqref{eq:variationalproblem}, i.e., $q < p^*$. In this case, the embedding $W^{1,p}_{\rm rad}(\R^N) \hookrightarrow L^q(\R^N)$ is compact, where $W^{1,p}_{\rm rad}(\R^N)$ denotes the subspace of $W^{1,p}(\R^N)$ of all radial functions. By a careful analysis of the functional $J_{N,p,\ga,\al,q}$ on \emph{normalizing vanishing sequences} ((NVS) for short) in $S_{N,p,\ga}$ (a sequence $\{u_n\}\subset S_{N,p,\ga}$ is a normalizing vanishing sequence if $u_n$ is radial function, $u_n \rightharpoonup 0$ weakly in $W^{1,p}(\R^N)$ and $\lim\limits_{R\to\infty}\lim\limits_{n\to\infty} \int_{B_R} \lt(|u_n|^p + \al |u_n|^q\rt) dx =0$), they proved that
\[
\sup\{\limsup_{n\to\infty} J_{N,p,\ga,\al,q}(u_n)\, :\, \{u_n\} \,\,\text{\rm is (NVS)}\} =1.
\]
Using this vanishing level, they can exclude the vanishing behavior of any maximizing sequence (which can be assumed to be spherically symmetric and non-increasing function) for $D_{N,p,\ga,\al,q}$ and hence obtain the existence and non-existence of maximizers for $D_{N,p,\ga,\al,q}$.

The critical case $q = p^*$ with $p \in (1,N)$ remains open from the works of Ishiwata and Wadade \cite{Ishiwata,Ishiwata1}. In this case, we will denote $D_{N,p,\ga,\al,p^*}, \al_{N,p,p^*}(\ga), J_{N,p,\ga,\al,p^*}$ and $I_{N,p,\ga,p^*}$ by $D_{N,p,\ga,\al}, \al_{N,p}(\ga), J_{N,p,\ga,\al}$ and $I_{N,p,\ga}$ respectively for sake of simple notation. Our goal of this paper is to treat this remaining open case. Let us mention here that studying the problem \eqref{eq:variationalproblem} in the critical case is more difficult and more complicate than studying the one in the subcritical case. The first difficult is that the embedding $W^{1,p}_{\rm rad}(\R^N) \hookrightarrow L^{p^*}(\R^N)$ is not compact which is different with the subcritical case where $W^{1,p}_{\rm rad}(\R^N) \hookrightarrow L^{q}(\R^N)$ with $q < p^*$ is compact. The second difficult is that the \emph{concentrating phenomena} of the maximizing sequence for $D_{N,p,\ga,\al}$ can occur (a sequence $\{u_n\}_n\subset S_{N,p,\ga}$ is called a \emph{normalizing concentrating sequence} ((NCS) for short) if $u_n \rightharpoonup 0$ weakly in $W^{1,p}(\R^N)$ and $\int_{B_R^c} (|\na u_n|^N + |u_n|^N) dx \to 0$ as $n\to \infty$ for any $R >0$). In order to prove the attainability of $D_{N,p,\ga,\al}$, the usual way is to exclude both the vanishing and concentrating behavior of the maximizing sequences for $D_{N,p,\ga,\al}$ by using the concentration--compactness principle due to Lions \cite{PL85I,PL85II} or by computing exactly the vanishing level defined as above and the concentrating level of $J_{N,p,\ga,\al}$, i.e., $\sup\{\limsup_{n\to\infty} J_{N,p,\ga,\al}(u_n)\, :\, \{u_n\} \,\,\text{\rm is (NCS)}\},$ then using Brezis--Lieb lemma \cite{BrezisLieb} to gain a compact result for this maximizing sequence. It seems that these approaches are not easy to solve the problem \eqref{eq:variationalproblem} in the critical case. In this paper, we give an elementary proof without using any rearrangement argument for the problem \eqref{eq:variationalproblem} in the critical case. Moreover, our approach also gives information on the maximizer for $D_{N,p,\ga,\al}$ if exist.

Instead of using the sharp Gagliardo--Nirenberg--Sobolev inequality \eqref{eq:GNSinequality}, the following sharp Sobolev inequality plays an important role in our analysis below
\begin{equation}\label{eq:Sobolevinequality}
\|u\|_{p^*} \leq S_{N,p} \|\na u\|_p\qquad\forall\, u\in W^{1,p}(\R^N),
\end{equation}
where $S_{N,p}$ is the sharp constant which depends only on $N$ and $p$. Noet that the Sobolev inequality \eqref{eq:Sobolevinequality} is exact the critical case of the Gagliardo--Nirenberg--Sobolev inequality \eqref{eq:GNSinequality}. The precise value of $S_{N,p}$ is well known (see, e.g., \cite{Aubin,Talenti,CNV}). We do not mention its precise value here since it is not important in our analysis below. It is also well known that $S_{N,p}$ is not attained in $W^{1,p}(\R^N)$ unless $p < N^{1/2}$. However, $S_{N,p}$ is attained in a larger space than $W^{1,p}(\R^N)$, that is, the homogeneous Sobolev space $\dot{W}^{1,p}(\R^N)$ which is completion of $C_0^\infty(\R^N)$ under the Dirichlet norm $\|\na u\|_p$ with $u\in C^{\infty}_0(\R^N)$. The Sobolev inequality \eqref{eq:Sobolevinequality} also holds true in $\dot{W}^{1,p}(\R^N)$ with the same constant $S_{N,p}$ and the equality occurs if $u(x) =a u_*(b (x-x_0))$ for some $a, b>0$, $x_0 \in \R^N$ with
\begin{equation}\label{eq:optimalSobolev}
u_*(x) = (1 + |x|^{\frac p{p-1}})^{-\frac{N-p}p}.
\end{equation}
It is clear that $u_* \not\in W^{1,p}(\R^N)$ unless $p < N^{1/2}$. We refer the interest reader to the papers \cite{Aubin,CNV,Talenti} for more about the sharp Sobolev inequality \eqref{eq:Sobolevinequality} and its extremal functions.

The main result (on the existence and non-existence of maximizers for the problem \eqref{eq:variationalproblem} in the critical case) of this paper is the following theorem.
\begin{theorem}\label{Maximizerscritical}
Let $N \geq 2$, $p\in (1,N)$ and $\al, \ga >0$. Then $D_{N,p,\ga,\al}$ is not attained if $p \geq N^{1/2}$. If $1 < p < N^{1/2}$, we then have
\begin{description}
\item (1) If $\ga > p^*$ then $\al_{N,p}(\ga) =0$ and $D_{N,p,\ga,\al}$ is attained for any $\al >0$.
\item (2) If $\ga =p^*$ then $\al_{N,p}(p^*) = p/(p^* S_{N,p}^{p^*})$ and $D_{N,p,p^*,\al}$ is attained for any $\al > \al_{N,p}(p^*)$ while it is not attained for any $\al \leq \al_{N,p}(p^*)$. Moreover, if $\al \leq \al_{N,p}(p^*)$ then $D_{N,p,\ga,\al} =1$.
\item (3) If $p < \ga < p^*$ then $D_{N,p,\ga,\al}$ is attained for any $\al \geq \al_{N,p}(\ga)$ while it is not attained for any $\al < \al_{N,p}(\ga)$. Moreover, if $\al \leq \al_{N,p}(\ga)$ then $D_{N,p,\ga,\al} =1$. 
\item (4) If $\ga \leq p$ then $\al_{N,p}(\ga) =S_{N,p}^{-p^*}$ and $D_{N,p,\ga,\al}$ is not attained for any $\al >0$. In this case, we have
\[
D_{N,p,\ga,\al} = \max\{1, \al S_{N,p}^{p^*}\}.
\]
\end{description}
\end{theorem}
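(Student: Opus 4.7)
The plan is to reduce the variational problem to the analysis of a single scalar function on $[0,1]$ by combining the constraint $\|u\|_{W^{1,p}_\ga}=1$ with the sharp Sobolev inequality. Define
\[
F_\al(t) := (1-t)^{p/\ga} + \al\, S_{N,p}^{p^*}\, t^{p^*/\ga}, \qquad t \in [0,1].
\]
The upper bound $D_{N,p,\ga,\al} \leq \max_{[0,1]} F_\al$ is immediate: for $u \in S_{N,p,\ga}$, setting $t := \|\na u\|_p^\ga \in [0,1]$, the constraint forces $\|u\|_p^p = (1-t)^{p/\ga}$, and the Sobolev inequality \eqref{eq:Sobolevinequality} gives $\|u\|_{p^*}^{p^*} \leq S_{N,p}^{p^*}\, t^{p^*/\ga}$, so $J_{N,p,\ga,\al}(u) \leq F_\al(t)$. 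For the matching lower bound $D \geq F_\al(t)$ at every $t \in [0,1]$, I would construct test sequences in $S_{N,p,\ga}$: for $t \in (0,1)$, superpose a concentrating profile $\vphi_\vep(x) = \eta(x)\, u_*(x/\vep)\, \vep^{-(N-p)/p}$ modeled on the Sobolev extremal \eqref{eq:optimalSobolev} (with a smooth cutoff $\eta$, needed only when $u_* \notin L^p(\R^N)$, i.e. $p \geq N^{1/2}$) together with a disjointly supported spreading bump carrying the remaining $L^p$-mass, then normalize to $S_{N,p,\ga}$. The endpoints $t=0$ and $t=1$ are recovered by keeping only the spreading or the concentrating component. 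This yields $D_{N,p,\ga,\al} = \max_{[0,1]} F_\al$.

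With this identity in hand, parts (1)--(4) follow from a case analysis of $F_\al$ on $[0,1]$. When $\ga > p^*$, the term $t^{p^*/\ga}$ is concave with infinite slope at $0$, so $F_\al$ has a strict interior maximum for every $\al > 0$; the identification $\al_{N,p}(\ga) = 0$ comes from a spreading sequence. When $\ga = p^*$, a direct computation gives $F_\al'(0^+) = -p/p^* + \al\, S_{N,p}^{p^*}$ and $F_\al'(1^-) = -\infty$, so an interior maximum appears precisely when $\al > p/(p^*\, S_{N,p}^{p^*})$. For $p < \ga < p^*$, the same boundary-derivative analysis produces the transition at $\al = \al_{N,p}(\ga)$. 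When $\ga \leq p$, both summands of $F_\al$ are convex, so the maximum lies at an endpoint of $[0,1]$, yielding $D_{N,p,\ga,\al} = \max(1, \al\, S_{N,p}^{p^*})$; in that regime the elementary inequality $1 - (1-x)^{p/\ga} \geq x$ for $x \in [0,1]$ (since $p/\ga \geq 1$), applied to $x = \|\na u\|_p^\ga$, combined with $\|\na u\|_p^\ga \geq \|\na u\|_p^p$ (valid because $\|\na u\|_p \leq 1$ and $\ga \leq p$) and Sobolev, yields $\al_{N,p}(\ga) = S_{N,p}^{-p^*}$.

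For the attainment dichotomy, if $\max F_\al$ is achieved only on $\{0,1\}$ then $D$ is reached along a vanishing or a concentrating sequence and no maximizer exists in $W^{1,p}(\R^N)$; this covers case (4) entirely and the sub-threshold ranges of $\al$ in cases (2) and (3). If $\max F_\al$ is achieved at some $t^* \in (0,1)$, any maximizer must saturate the sharp Sobolev inequality and so is a rescaling $\mu\, u_*(\lam\, \cdot)$ of the extremal \eqref{eq:optimalSobolev}; such a function lies in $W^{1,p}(\R^N)$ iff $p < N^{1/2}$, which is exactly the dichotomy claimed in the theorem. In the attained case one can choose $\mu, \lam > 0$ explicitly so that $\mu\, u_*(\lam\, \cdot) \in S_{N,p,\ga}$ and $\|\na(\mu\, u_*(\lam\, \cdot))\|_p^\ga = t^*$, producing the maximizer.

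The main technical obstacle will be the construction of the test sequences realizing every $t \in (0,1)$: one must carry out a Br\'ezis--Nirenberg type truncation of $u_*$ with sharp asymptotic control on all three norms $\|\cdot\|_p$, $\|\na \cdot\|_p$, $\|\cdot\|_{p^*}$, and verify that the concentrating and spreading components decouple cleanly so that both the constraint and the functional behave additively in the limit. Once this is in place, the rest of the argument reduces to elementary one-variable calculus on $F_\al$ together with the known uniqueness (up to dilation and translation) of the Sobolev extremal.
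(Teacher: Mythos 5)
Your reduction is essentially the paper's: your $F_\al(t)=(1-t)^{p/\ga}+\al S_{N,p}^{p^*}t^{p^*/\ga}$ on $[0,1]$ is exactly the function $k_{N,p,\ga,\al}(s)$ that the paper obtains from $f_{N,p,\ga,\al}$ via $s=t/(1+t)$, and the upper bound, the equivalence ``$D$ attained iff the max of $F_\al$ is interior,'' and the $p\geq N^{1/2}$ dichotomy via saturation of the Sobolev inequality all match the paper's Lemmas 2.3--2.4. One remark on the lower bound: the two-bump (concentrating plus disjoint spreading) construction is unnecessary. The rescaling $u\mapsto\lam^{1/p}u(\lam^{1/N}\cdot)$ preserves $\|u\|_p$ and the ratio $\|u\|_{p^*}/\|\na u\|_p$ while sweeping $\|\na u\|_p$ over $(0,\infty)$, so a single truncated extremal $u_{*,R}$, rescaled and normalized, already realizes every $t\in(0,1)$ with asymptotically sharp Sobolev ratio; this is what the paper does and it removes your ``main technical obstacle'' entirely.

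The genuine gap is in case (3). For $p<\ga<p^*$ one computes $F_\al'(0^+)=-p/\ga$, which is negative and \emph{independent of $\al$} (the term $\al S_{N,p}^{p^*}\tfrac{p^*}{\ga}t^{p^*/\ga-1}$ vanishes at $0$ because $p^*/\ga>1$), and $F_\al'(1^-)=-\infty$. So the ``same boundary-derivative analysis'' as in case (2) sees no transition at all: it would wrongly suggest non-attainment for every $\al$. The actual mechanism is global, not boundary-local: one must show that $l_{N,p,\ga}(s)=s^{-p^*/\ga}\bigl(1-(1-s)^{p/\ga}\bigr)$ attains its infimum at an interior point $s_2\in(0,1)$ (this is the content of the paper's analysis of $m_{N,p,\ga}$ in Lemma 2.7), so that for $\al\geq\al_{N,p}(\ga)$ one gets $F_\al(s_2)\geq 1=F_\al(0)$, and separately that $\sup F_\al>\al S_{N,p}^{p^*}=F_\al(1)$ (the paper's Proposition 2.2, valid for $\ga>p$); only then is the maximum forced into the interior. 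This interior-minimum property of $l_{N,p,\ga}$ is also precisely what distinguishes case (3) from case (2) at the threshold: for $\ga=p^*$ the infimum of the corresponding $g$ is achieved only in the limit, so $D$ is \emph{not} attained at $\al=\al_{N,p}(p^*)$, whereas for $p<\ga<p^*$ it is achieved at $s_2$, so $D$ \emph{is} attained at $\al=\al_{N,p}(\ga)$. Your sketch does not produce this asymmetry and, as written, the argument for case (3) would fail. (A smaller omission: the identifications $\al_{N,p}(\ga)=0$ for $\ga>p^*$ and $\al_{N,p}(p^*)=p/(p^*S_{N,p}^{p^*})$ need the companion identity $\al_{N,p}(\ga)=S_{N,p}^{-p^*}\inf_{t>0}g_{N,p,\ga}(t)$, i.e.\ a matching test-function lower bound for the infimum, which you only gesture at.) The rest of the proposal --- cases (1), (4), the endpoint/non-attainment discussion, and the classification of maximizers --- is correct and in line with the paper.
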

Theorem \ref{Maximizerscritical} together with Theorem \ref{IshiwataWadade} of Ishiwata and Wadade completes the study of the variational problem \eqref{eq:variationalproblem}. The main key in the proof of Theorem \ref{Maximizerscritical} is the following expression of $D_{N,p,\ga,\al}$ and $\al_{N,p}(\ga)$.
\begin{theorem}\label{formulacritical}
Let $N \geq 2$, $p\in (1,N)$ and $\al, \ga >0$. Then it holds
\begin{equation}\label{eq:Dformulacritical}
D_{N,p,\gamma,\al} = \sup_{t >0}\, \frac{(1+ t)^{\frac{p^*-p}\ga} + \al S_{N,p}^{p^*} t^{\frac{p^*}\gamma}}{(1+ t)^{\frac{p^*}\ga}} =:\sup_{t >0}\, f_{N,p,\ga,\al}(t),
\end{equation}
and
\begin{equation}\label{eq:alphaformulacritical}
\alpha_{N,p}(\gamma) = \frac1{S_{N,p}^{p^*}} \inf_{t > 0} \, (1+t)^{\frac{p^*-p}\ga} \frac{(1+ t)^{\frac p\ga} -1}{t^{\frac{p^*}\ga}} =: \frac1{S_{N,p}^{p^*}} \inf_{t > 0} \, g_{N,p,\ga}(t).
\end{equation}
\end{theorem}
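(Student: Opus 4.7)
The plan is to derive both identities from a single one-parameter reduction coupled with the sharp Sobolev inequality \eqref{eq:Sobolevinequality}. For $u\in S_{N,p,\ga}$, introduce the parameter $t:=(\|\na u\|_p/\|u\|_p)^\ga\in(0,\infty)$; the constraint $\|\na u\|_p^\ga+\|u\|_p^\ga=1$ then forces
\[
\|u\|_p^p=(1+t)^{-p/\ga},\qquad \|\na u\|_p^p=t^{p/\ga}(1+t)^{-p/\ga}.
\]
Plugging this into $\|u\|_{p^*}^{p^*}\leq S_{N,p}^{p^*}\|\na u\|_p^{p^*}$ gives immediately
\[
J_{N,p,\ga,\al}(u)\leq f_{N,p,\ga,\al}(t),\qquad I_{N,p,\ga}(u)\geq S_{N,p}^{-p^*}g_{N,p,\ga}(t).
\]
Taking supremum (resp.\ infimum) over $u$ yields the "$\leq$" half of \eqref{eq:Dformulacritical} and the "$\geq$" half of \eqref{eq:alphaformulacritical}.

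The main content is then the matching bound. For each fixed $t>0$ I will build a sequence in $W^{1,p}(\R^N)$ that, after normalization in $S_{N,p,\ga}$, asymptotically realizes the triple of squared-norms $(A(t),B(t),S_{N,p}^{p^*}A(t)^{p^*/p})$, where $A(t)=t^{p/\ga}(1+t)^{-p/\ga}$ and $B(t)=(1+t)^{-p/\ga}$. The obstacle is that the Talenti extremal $u_*$ from \eqref{eq:optimalSobolev} lies outside $W^{1,p}(\R^N)$ when $p\geq N^{1/2}$, so pure dilations of a single profile cannot simultaneously tune all three norms. I will get around this by using two pieces with disjoint supports: a concentrating piece that captures the Sobolev ratio and a spreading piece that supplies the $L^p$-mass.

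Concretely, let $\phi_k\in C_0^\infty(\R^N)$ be compactly supported smooth approximations of $u_*$ with $\|\phi_k\|_{p^*}^{p^*}/\|\na\phi_k\|_p^{p^*}\to S_{N,p}^{p^*}$ as $k\to\infty$. Given $k$, define the concentrating piece $v_n(x)=a_n\phi_k(\lam_n x)$ with $\lam_n\to\infty$ and $a_n$ chosen so that $\|\na v_n\|_p^p=A(t)$; by scaling, $\|v_n\|_{p^*}^{p^*}/\|\na v_n\|_p^{p^*}$ is independent of $\lam_n$ (equal to that of $\phi_k$) while $\|v_n\|_p^p$ decays like $\lam_n^{-p}$ and hence tends to $0$. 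Next fix $\psi\in C_0^\infty(\R^N)$ with $\|\psi\|_p^p=B(t)$ and set the spreading piece $w_n(x)=\mu_n^{N/p}\psi(\mu_n(x-x_n))$ with $\mu_n\to 0$; a direct computation gives $\|w_n\|_p^p=B(t)$ constant while $\|\na w_n\|_p^p=\mu_n^p\|\na\psi\|_p^p$ and $\|w_n\|_{p^*}^{p^*}=\mu_n^{p^*}\|\psi\|_{p^*}^{p^*}$ both tend to $0$. Choosing $|x_n|$ large enough that $\mathrm{supp}\,v_n\cap\mathrm{supp}\,w_n=\emptyset$, the sum $u_n=v_n+w_n$ has additive norms, and a diagonal argument in $k$ drives its Sobolev ratio to $S_{N,p}^{p^*}$. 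After renormalizing to land in $S_{N,p,\ga}$ (which costs only a $1+o(1)$ factor), one obtains $J_{N,p,\ga,\al}(u_n)\to f_{N,p,\ga,\al}(t)$ and $I_{N,p,\ga}(u_n)\to S_{N,p}^{-p^*}g_{N,p,\ga}(t)$, closing the argument.

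The main difficulty is the simultaneous scheduling of the four parameters $(\lam_n,\mu_n,|x_n|,k_n)$: the concentration scale $\lam_n$ and the spreading scale $\mu_n$ must be coordinated with a translation $|x_n|$ growing faster than the sum of the two support radii $R_{\phi_k}/\lam_n+R_\psi/\mu_n$, and the approximation index $k_n$ must be diagonalized last so that the Sobolev ratio of $v_n$ is genuinely attained in the limit. Once this bookkeeping is arranged, the norm asymptotics follow from routine scaling identities and the two formulas in Theorem~\ref{formulacritical} are established together.
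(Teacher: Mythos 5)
Your proposal is correct. The first half (feeding the constraint into the sharp Sobolev inequality \eqref{eq:Sobolevinequality} to get $J_{N,p,\ga,\al}(u)\le f_{N,p,\ga,\al}(t)$ and $I_{N,p,\ga}(u)\ge S_{N,p}^{-p^*}g_{N,p,\ga}(t)$ with $t=\|\na u\|_p^\ga/\|u\|_p^\ga$) is identical to the paper's. Where you diverge is the matching bound: the paper uses a \emph{single} profile, namely the truncation $u_{*,R}$ of the Talenti function, and sweeps $t$ over all of $(0,\infty)$ with the one-parameter family $w_{R,\lam}(x)=\lam^{1/p}u_{*,R}(\lam^{1/N}x)/\|\cdot\|_{W^{1,p}_\ga}$, whereas you use a two-bubble sequence (a concentrating near-extremal piece plus a disjointly supported spreading piece carrying the $L^p$-mass). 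Your construction works --- the scaling identities you list are right, the norms add by disjointness, and the diagonalization in $k$ is routine --- but your stated reason for needing it is mistaken: the dilation $u\mapsto\lam^{1/p}u(\lam^{1/N}\cdot)$ preserves $\|u\|_p$ exactly while scaling $\|\na u\|_p^p$ by $\lam^{p/N}$ and leaving the Sobolev quotient $\|u\|_{p^*}/\|\na u\|_p$ invariant, so a single compactly supported near-extremal profile already realizes every $t\in(0,\infty)$ with near-optimal quotient; the failure of $u_*\in W^{1,p}$ for $p\ge N^{1/2}$ is cured by truncation alone, with no need to split off the $L^p$-mass. The one thing the paper's single-profile construction buys that yours does not is the exact identity \eqref{eq:dangthuc} when $p<N^{1/2}$ (taking $u_*$ itself rather than an approximation), which is what later converts attainability of $\sup_{t>0}f_{N,p,\ga,\al}$ into an explicit maximizer in Lemma \ref{relationcritical}; your two-bubble sequence only yields the supremum formula itself, which is all Theorem \ref{formulacritical} asserts.
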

The equalities \eqref{eq:Dformulacritical} and \eqref{eq:alphaformulacritical} give us the simple and computable expressions of $D_{N,p,\ga,\al}$ and $\al_{N,p}(\ga)$. Their proofs are mainly based on the Sobolev inequality \eqref{eq:Sobolevinequality} and on the estimates on test function constructed from the function $u_*$ above. Let us explain the main ideas in the proof of Theorem \ref{Maximizerscritical}. The non-existence result in the case $p \geq N^{1/2}$ is simply a consequence of the non-attainability of the Sobolev inequality \eqref{eq:Sobolevinequality} in $W^{1,p}(\R^N)$. In the case $p < N^{1/2}$, the proof is mainly based on the remark that $D_{N,p,\ga,\al}$ is attained in $S_{N,p,\ga}$ if and only if the function $f_{N,p,\ga,\al}$ attains its maximum value in $(0,\infty)$ (see Lemma \ref{relationcritical} below). This remark enables us reduce studying the attainability of $D_{N,p,\ga,\al}$ to studying the attainability of $\sup_{t>0} f_{N,p,\ga,\al,p}(t)$ in $(0,\infty)$. The latter problem is more elementary. We can use some simple differential calculus to solve it. Moreover, we will see from our proof that, whenever $\sup_{t >0} f_{N,p,\ga,\al}$ is attained by some $t_0 \in (0,\infty)$, we can construct explicitly a maximizer for $D_{N,p,\ga,\al}$. Our constructed maximizer comes from the maximizer of the Sobolev inequality \eqref{eq:Sobolevinequality}. 

It is interesting that our approach to prove Theorem \ref{Maximizerscritical} also gives a new proof of Theorem \ref{IshiwataWadade}. Comparing with the proof of Ishiwata and Wadade, our proof is more simple and more elementary. As in the proof of Theorem \ref{Maximizerscritical}, we will establish the following simple and computable expression for both $D_{N,p,\ga,\al,q}$ and $\al_{N,p,q}(\ga)$.
\begin{theorem}\label{explicitformula}
Let $N \geq 2$, $p\in (1,N]$, $q \in (p,p^*)$ and $\al, \ga >0$. Then it holds
\begin{equation}\label{eq:Dformula}
D_{N,p,\ga,\al,q} = \sup_{t >0}\, \frac{(1+t)^{\frac{q-p}\ga} + \al B_{N,p,q} t^{\frac{\ga_c}\ga}}{(1+t)^{\frac q\ga}} =: \sup_{t>0}\, f_{N,p,\ga,\al,q}(t),
\end{equation}
and
\begin{equation}\label{eq:alphaformula}
\alpha_{N,p,q}(\ga) =\frac1{B_{N,p,q}}\, \inf_{t >0}\, \frac{(1+t)^{\frac{q-p}\ga} \lt((1+t)^{\frac p\ga} -1\rt)}{t^{\frac{\ga_c}\ga}} =: \inf_{t>0}\, g_{N,p,\ga,q}(t).
\end{equation}
\end{theorem}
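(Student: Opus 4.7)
The plan is to reduce both variational problems to one-dimensional optimizations via a single natural parametrization, using the sharp Gagliardo--Nirenberg--Sobolev inequality \eqref{eq:GNSinequality} for the bounds and an explicit scaling of its extremizer $v_*$ for the matching sharpness.

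First step (bounds). For any $u\in S_{N,p,\ga}$ I set $a=\|\na u\|_p^p>0$, $b=\|u\|_p^p>0$, and $t=a^{\ga/p}/b^{\ga/p}\in(0,\infty)$; the constraint $a^{\ga/p}+b^{\ga/p}=1$ forces
\[
b=(1+t)^{-p/\ga},\qquad a=t^{p/\ga}(1+t)^{-p/\ga}.
\]
Plugging these into the sharp GNS bound $\|u\|_q^q\le B_{N,p,q}\,a^{\ga_c/p}b^{(q-\ga_c)/p}$ and using $\ga_c/p+(q-\ga_c)/p=q/p$ gives $\|u\|_q^q\le B_{N,p,q}\,t^{\ga_c/\ga}(1+t)^{-q/\ga}$, hence $J_{N,p,\ga,\al,q}(u)\le f_{N,p,\ga,\al,q}(t)$. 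Combining the same GNS bound with the identity $1-b=((1+t)^{p/\ga}-1)(1+t)^{-p/\ga}$ yields $(1-\|u\|_p^p)/\|u\|_q^q\ge g_{N,p,\ga,q}(t)/B_{N,p,q}$. Taking $\sup$ (resp. $\inf$) over $u$ produces the inequalities ``$\le$'' in \eqref{eq:Dformula} and ``$\ge$'' in \eqref{eq:alphaformula}.

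Second step (sharpness). For each $t>0$ I construct an element of $S_{N,p,\ga}$ that realizes $f_{N,p,\ga,\al,q}(t)$ and $g_{N,p,\ga,q}(t)/B_{N,p,q}$ exactly. Let $v_*$ be a nonnegative, radial, nonincreasing GNS extremizer (existence and uniqueness up to symmetries recalled in the introduction) and set $v_{c,\lam}(x)=c\,v_*(\lam x)$ for $c,\lam>0$. The scaling identities
\[
\|\na v_{c,\lam}\|_p^p=c^p\lam^{p-N}\|\na v_*\|_p^p,\quad \|v_{c,\lam}\|_p^p=c^p\lam^{-N}\|v_*\|_p^p,\quad \|v_{c,\lam}\|_q^q=c^q\lam^{-N}\|v_*\|_q^q
\]
show that $\|\na v_{c,\lam}\|_p^\ga/\|v_{c,\lam}\|_p^\ga=\lam^\ga\|\na v_*\|_p^\ga/\|v_*\|_p^\ga$ sweeps $(0,\infty)$ as $\lam$ varies; I choose $\lam$ so this ratio equals the prescribed $t$, then choose $c$ uniquely to enforce $v_{c,\lam}\in S_{N,p,\ga}$. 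Since $v_*$ saturates GNS, the inequalities of Step~1 become equalities for $v_{c,\lam}$, proving the reverse bounds.

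The proof is essentially algebraic; the main delicate point is the exponent bookkeeping in Step~1 that collapses the GNS estimate into the clean one-variable expressions $f_{N,p,\ga,\al,q}(t)$ and $g_{N,p,\ga,q}(t)$. No compactness or rearrangement argument is required, which is the source of the simplification over \cite{Ishiwata,Ishiwata1}, and the same strategy will carry over verbatim to Theorem~\ref{formulacritical} once GNS is replaced by the sharp Sobolev inequality \eqref{eq:Sobolevinequality} and $v_*$ by $u_*$ from \eqref{eq:optimalSobolev}.
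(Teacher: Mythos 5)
Your proposal is correct and follows essentially the same route as the paper: the upper bound for $D_{N,p,\ga,\al,q}$ (resp.\ lower bound for $\al_{N,p,q}(\ga)$) via the sharp Gagliardo--Nirenberg--Sobolev inequality combined with the normalization $\|\na u\|_p^\ga+\|u\|_p^\ga=1$ to collapse everything to the single variable $t=\|\na u\|_p^\ga/\|u\|_p^\ga$, and the matching bound via normalized dilations of the GNS extremizer $v_*$, whose parameter sweeps all of $(0,\infty)$. Your family $c\,v_*(\lam x)$ with $c$ fixed by the constraint is exactly the paper's test function $v_\lam$ up to relabeling the dilation parameter, so the two arguments coincide.
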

The equalities \eqref{eq:Dformula} and \eqref{eq:alphaformula} also will be the key in the proof of Theorem \ref{IshiwataWadade}. Similar with the proof of Theorem \ref{Maximizerscritical}, we also have a remark that $D_{N,p,\ga,\al,q}$ is attained in $S_{N,p,\ga}$ if and only $\sup_{t >0} f_{N,p,\ga,\al,q}$ is attained in $(0,\infty)$ (see Lemma \ref{subrelation} below). By this remark, we only have to study the attainability of $\sup_{t >0} f_{N,p,\ga,\al,q}$ in $(0,\infty)$. We will see below that the study of $\sup_{t>0}f_{N,p,\ga,\al,q}(t)$ is easier than the one of $\sup_{t>0} f_{N,p,\ga,\al}(t)$. Indeed, the behavior of $f_{N,p,\ga,\al,q}$ at infinity does not play any role in the study of $\sup_{t>0}f_{N,p,\ga,\al,q}(t)$ due to the subcriticality of the problem (it is clear that $\lim_{t\to \infty} f_{N,p,\ga,\al,q}(t) =0$). Hence, it is enough to study the behavior of $f_{N,p,\ga,\al,q}$ when $t$ is near $0$. Contrary with the subcritical case, the behavior of $f_{N,p,\ga,\al}$ at infinity is important in our analysis in the critical case. In fact, this is corresponding to the appearance of the concentration phenomena in studying the maximizing sequence for $D_{N,p,\ga,\al}$. 

Another advantage of our approach is that it can be applied to the variational problems of type \eqref{eq:variationalproblem} related to the fractional Sobolev type inequality (i.e., for the fractional Laplacian operator $(-\De)^s$ with $s \in (0,N/2)$). It seems that the method of Ishiwata and Wadade can not be applied immediately to these problems due to the non-locality of the problems. The results in this setting are given in Theorem \ref{sub} and Theorem \ref{critical} below both in subcritical case and critical case. Since the proof of these theorems is completely similar with the one of Theorem \ref{IshiwataWadade} and Theorem \ref{Maximizerscritical}. So we only sketch their proof in section \S4 below.


The rest of this paper is constructed as follows. In the next section, we will prove Theorem \ref{Maximizerscritical} on the effect of $p, \al$ and $\ga$ on the existence and non-existence of maximizers for the variational problem \eqref{eq:variationalproblem} in the critical case. Theorem \ref{formulacritical} is also proved in this section. In section \S3, we prove Theorem \ref{explicitformula} and apply it to give a new proof of Theorem \ref{IshiwataWadade} following the approach to Theorem \ref{Maximizerscritical}. In the last section \S4, we explain how our approach can be applied to the similar problems for the fractional Laplacian opertors.



\section{Proof of Theorem \ref{Maximizerscritical}}
This section is devoted to prove Theorem \ref{Maximizerscritical}. We first give a proof of Theorem \ref{formulacritical} on the expressions of $D_{N,p,\ga,\al}$ and $\al_{N,p}(\ga)$ which will be used in our proof of Theorem \ref{Maximizerscritical}.
\begin{proof}[Proof of Theorem \ref{formulacritical}]
We first prove \eqref{eq:Dformulacritical}. For any $u \in S_{N,p,\ga}$, by Sobolev inequality \eqref{eq:Sobolevinequality} we get
\begin{align*}
J_{N,p,\al,\ga}(u) \leq \|u\|_p^p + \al S_{N,p}^{p^*} \|\na u\|_p^{p^*} &=\frac{\|u\|_p^p (\|u\|_p^\gamma + \|\na u\|_p^\ga)^{\frac{p^*-p}\ga} + \al S_{N,p}^{p^*} \|\na u\|_p^{p^*}}{(\|u\|_p^\ga + \|\na u\|_p^\ga)^{\frac{p^*}\ga}}\notag\\
&= \frac{\lt(1+ \frac{\|\na u\|_p^\ga}{\|u\|_p^\ga}\rt)^{\frac{p^*-p}\ga} + \al S_{N,p}^{p^*} \lt(\frac{\|\na u\|_p^\ga}{\|u\|_p^\ga}\rt)^{\frac{p^*}\ga}}{\lt(1+ \frac{\|\na u\|_p^\ga}{\|u\|_p^\ga}\rt)^{\frac{p^*}\ga}}\notag\\
&\leq \sup_{t >0}\, \frac{(1+ t)^{\frac{p^*-p}\ga} + \al S_{N,p}^{p^*} t^{\frac{p^*}\gamma}}{(1+ t)^{\frac{p^*}\ga}}.
\end{align*}
Taking the supremum over all $u\in S_{N,p,\ga}$, we get
\begin{equation}\label{eq:aaaa}
D_{N,p,\ga,\al} \leq \sup_{t >0}\, \frac{(1+ t)^{\frac{p^*-p}\ga} + \al S_{N,p}^{p^*} t^{\frac{p^*}\gamma}}{(1+ t)^{\frac{p^*}\ga}}.
\end{equation}

For a reversed inequality of \eqref{eq:aaaa}, we take $u_*$ to be a maximizer for the Sobolev inequality \eqref{eq:Sobolevinequality} (we can take $u_*$ as in \eqref{eq:optimalSobolev}). Our aim is to construct the test functions to estimate $D_{N,p,\ga,\al}$ from $u_*$. Note that $u_* \not\in W^{1,p}(\R^N)$ in general, hence we can not use directly it as a test function for $D_{N,p,\ga,\al}$. To proceed, we will truncate the function $u_*$ as follows. Let $\varphi \in C_0^\infty(\R^N)$ be a nonnegative function such that $0\leq \vphi \leq 1$, $\vphi(x) =1$ if $|x|\leq 1$ and $\vphi(x) =0$ if $|x| \geq 2$. For any $R >0$ define $u_{*,R}(x) = u_*(x) \vphi(x/R)$ then $u_{*,R} \in W^{1,p}(\R^N)$ and 
\[
\lim_{R\to \infty} \|u_{*,R}\|_{p^*}^{p^*} = \|u_*\|_{p^*}^{p^*}\quad\text{\rm and }\quad \lim_{R \to \infty} \|\na u_{*,R}\|_p^p = \|\na u_*\|_p^p.
\]
Consequently, we have
\[
\lim_{R\to \infty} \frac{\|u_{*,R}\|_{p^*}}{\|\na u_{*,R}\|_p} = S_{N,p}.
\]
For any $\ep >0$, we can choose $R >0$ such that $\|u_{*,R}\|_{p^*} \geq (S_{N,p}-\ep)\|\na u_{*,R}\|_p$. For any $\lam >0$ define
\begin{equation}\label{eq:testfunctioncritical}
w_{R,\lam}(x) = \frac{\lam^{\frac1p} u_{*,R}(\lam^{\frac1N} x)}{\lt(\|u_{*,R}\|_p^\gamma + \lam^{\frac \ga N} \|\na u_{*,R}\|_p^\ga\rt)^{\frac1\ga}}.
\end{equation}
We have $w_{R,\lam} \in S_{N,p,\ga}$, then it holds
\begin{align*}
D_{N,p,\ga,\al}&\geq J_{N,p,\ga,\al}(w_{R,\lam})=\frac{\|u_{*,R}\|_p^p}{\lt(\|u_{*,R}\|_p^\ga + \lam^{\frac\ga N} \|\na u_{*,R}\|_p^\ga\rt)^{\frac p\ga}} + \al \frac{\lam^{\frac{p^* -p}p} \|u_{*,R}\|_{p^*}^{p^*}}{\lt(\|u_{*,R}\|_p^\ga + \lam^{\frac\ga N} \|\na u_{*,R}\|_p^\ga\rt)^{\frac {p^*}\ga}}\notag\\
&\geq \frac{\|u_{*,R}\|_p^p}{\lt(\|u_{*,R}\|_p^\ga + \lam^{\frac\ga N} \|\na u_{*,R}\|_p^\ga\rt)^{\frac p\ga}} + \al (S_{N,p} -\ep)^{p^*} \frac{\lam^{\frac{p^* -p}p} \|\na u_{*,R}\|_{p}^{p^*}}{\lt(\|u_{*,R}\|_p^\ga + \lam^{\frac\ga N} \|\na u_{*,R}\|_p^\ga\rt)^{\frac {p^*}\ga}}\notag\\
&\geq \lt(1 -\frac{\ep}{S_{N,p}}\rt)^{p^*}\lt[\frac{\|u_{*,R}\|_p^p}{\lt(\|u_{*,R}\|_p^\ga + \lam^{\frac\ga N} \|\na u_{*,R}\|_p^\ga\rt)^{\frac p\ga}} +  \frac{\al S_{N,p}^{p^*}\,\lam^{\frac{p^* -p}p} \|\na u_{*,R}\|_{p}^{p^*}}{\lt(\|u_{*,R}\|_p^\ga + \lam^{\frac\ga N} \|\na u_{*,R}\|_p^\ga\rt)^{\frac {p^*}\ga}}\rt]\\
&=\lt(1 -\frac{\ep}{S_{N,p}}\rt)^{p^*} \frac{\lt(1+ \lam^{\frac\ga N} \frac{\|\na u_{*,R}\|_p^\ga}{\|u\|_p^\ga}\rt)^{\frac{p^*-p}\ga} + \al S_{N,p}^{p^*}\lt(\lam^{\frac \ga N} \frac{\|\na u_{*,R}\|_p^\ga}{\|u\|_p^\ga}\rt)^{\frac{p^*}\ga}}{\lt(1+ \lam^{\frac\ga N} \frac{\|\na u_{*,R}\|_p^\ga}{\|u\|_p^\ga}\rt)^{\frac{p^*}\ga}}.
\end{align*}
The preceding inequality holds for any $\lam >0$, hence
\[
D_{N,p,\ga,\al} \geq \lt(1 -\frac{\ep}{S_{N,p}}\rt)^{p^*}\sup_{t >0}\, \frac{(1+ t)^{\frac{p^*-p}\ga} + \al S_{N,p}^{p^*} t^{\frac{p^*}\gamma}}{(1+ t)^{\frac{p^*}\ga}},
\]
for any $\ep >0$. Letting $\ep \to 0^+$, we get
\begin{equation}\label{eq:bbbb}
S_{N,p,\ga,\al} \geq \sup_{t >0}\, \frac{(1+ t)^{\frac{p^*-p}\ga} + \al S_{N,p}^{p^*} t^{\frac{p^*}\gamma}}{(1+ t)^{\frac{p^*}\ga}}.
\end{equation}
Combining \eqref{eq:aaaa} and \eqref{eq:bbbb} together, we obtain \eqref{eq:Dformulacritical}.

We next prove \eqref{eq:alphaformulacritical}. For any function $u\in S_{N,p,\ga}$, using the sharp Sobolev inequality \eqref{eq:Sobolevinequality}, we get
\begin{align}\label{eq:usingSobolev}
I_{N,p,\ga}(u) \geq \frac1{S_{N,p}^{p^*}} \frac{1 -\|u\|_p^p}{\|\na u\|_p^{p^*}}&=\frac1{S_{N,p}^{p^*}}\frac{\lt(\|\na u\|_p^\ga + \|u\|_p^\ga\rt)^{\frac{p^*-p}\ga} \lt(\lt(\|\na u\|_p^\ga + \|u\|_p^\ga\rt)^{\frac p\ga} -\|u\|_p^p\rt)}{\|\na u\|_p^{p^*}}\notag\\
&=\frac1{S_{N,p}^{p^*}}\frac{\lt(\frac{\|\na u\|_p^\ga}{\|u\|_p^\ga} + 1\rt)^{\frac{p^*-p}\ga}\lt(\lt(\frac{\|\na u\|_p^\ga}{\|u\|_p^\ga}+ 1 \rt)^{\frac p\ga} -1\rt)}{\lt(\frac{\|\na u\|_p^\ga}{\|u\|_p^\ga}\rt)^{\frac {p^*}\ga}}\notag\\
&\geq \frac1{S_{N,p}^{p^*}} \inf_{t > 0} \, (1+t)^{\frac{p^*-p}\ga} \frac{(1+ t)^{\frac p\ga} -1}{t^{\frac{p^*}\ga}}.
\end{align}
We finish our proof by showing a reversed inequality of \eqref{eq:usingSobolev}. For any $\ep >0$, we choose $w_{R,\lam}$ defined by \eqref{eq:testfunctioncritical} as test function. We then have
\begin{align}\label{eq:Sob*}
\al_{N,p}(\gamma) &\leq I_{N,p,\gamma}(w_{R,\lam})\notag\\
& =\lt(1 - \frac{\|u_{*,R}\|_p^p}{\lt(\|u_{*,R}\|_p^\gamma + \lam^{\frac \ga N} \|\na u_{*,R}\|_p^\ga\rt)^{\frac p\ga}}\rt) \lt(\frac{\lam^{\frac{p^*-p}p} \|u_{*,R}\|_{p^*}^{p^*}}{\lt(\|u_{*,R}\|_p^\gamma + \lam^{\frac \ga N} \|\na u_{*,R}\|_p^\ga\rt)^{\frac {p^*}\ga}}\rt)^{-1} \notag\\
&\leq \frac1{(S_{N,p} -\ep)^{p^*}} \lt(1 - \frac{\|u_{*,R}\|_p^p}{\lt(\|u_{*,R}\|_p^\gamma + \lam^{\frac \ga N} \|\na u_{*,R}\|_p^\ga\rt)^{\frac p\ga}}\rt)\lt(\frac{\lam^{\frac{p^*-p}p} \|\na u_{*,R}\|_p^{p^*}}{\lt(\|u_{*,R}\|_p^\gamma + \lam^{\frac \ga N} \|\na u_{*,R}\|_p^\ga\rt)^{\frac {p^*}\ga}}\rt)^{-1}\notag\\
&= \frac1{(S_{N,p} -\ep)^{p^*}}\lt(1 +  \frac{\lam^{\frac \ga N}\|\na u_{*,R}\|_p^\ga}{\|u_{*,R}\|_p^\gamma}\rt)^{\frac {p^*-p}\ga}\frac{\lt(1 +  \frac{\lam^{\frac \ga N}\|\na u_{*,R}\|_p^\ga}{\|u_{*,R}\|_p^\gamma}\rt)^{\frac {p}\ga} -1}{\lt( \frac{\lam^{\frac \ga N}\|\na u_{*,R}\|_p^\ga}{\|u_{*,R}\|_p^\gamma}\rt)^{\frac{p^*}\ga}}.
\end{align}
The inequality \eqref{eq:Sob*} holds for any $\lam >0$, hence it holds
\[
\al^*(\gamma) \leq \frac1{(S_{N,p} -\ep)^{p^*}} \inf_{t > 0} \, (1+t)^{\frac{p^*-p}\ga} \frac{(1+ t)^{\frac p\ga} -1}{t^{\frac{p^*}\ga}},
\]
for any $\ep >0$. Letting $\ep \to 0$, we get 
\begin{equation}\label{eq:reverseineq}
\al^*(\ga) \leq \frac1{S_{N,p}^{p^*}} \inf_{t > 0} \, (1+t)^{\frac{p^*-p}\ga} \frac{(1+ t)^{\frac p\ga} -1}{t^{\frac{p^*}\ga}}.
\end{equation}
Combining \eqref{eq:usingSobolev} and \eqref{eq:reverseineq} we obtain \eqref{eq:alphaformulacritical}.
\end{proof}

Recall that
\[
f_{N,p,\ga,\al}(t) = \frac{(1+ t)^{\frac{p^*-p}\ga} + \al S_{N,p}^{p^*} t^{\frac{p^*}\gamma}}{(1+ t)^{\frac{p^*}\ga}},
\]
and
\[
g_{N,p,\ga}(t) = (1+t)^{\frac{p^*-p}\ga} \frac{(1+ t)^{\frac p\ga} -1}{t^{\frac{p^*}\ga}}.
\]

From the proof of \eqref{eq:aaaa}, we see that
\begin{equation}\label{eq:sosanh}
J_{N,p,\ga,\al}(u) \leq f_{N,p,\ga,\al}\lt(\frac{\|\na u\|_p^\ga}{\|u\|_p^\ga}\rt)\qquad \forall\, u\in S_{N,p,\ga}.
\end{equation}
Moreover, if $p < N^{1/2}$ then $u_* \in W^{1,p}(\R^N)$ and $\|u_*\|_{p^*} =S_{N,p}\|\na u_*\|_p$. For $\lambda >0$, define 
\begin{equation}\label{eq:newfunction}
w_\lam(x) = \frac{\lam^{\frac 1p} u_*(\lam^{\frac1N}x)}{\lt(\|u_*\|_p^\ga + \lam^{\frac\ga N} \|\na u_*\|_p^{\ga}\rt)^{\frac1\ga}}.
\end{equation}
Using the argument in the proof of \eqref{eq:bbbb} with $w_\lam$ as test function, we obtain
\begin{equation}\label{eq:dangthuc}
J_{N,p,\ga,\al}(w_\lam) = f_{N,p,\ga,\al}\lt(\lam^{\frac \ga N} \frac{\|\na u_*\|_p^\ga}{\|u_*\|_p^\ga}\rt).
\end{equation}

We first have the following estimates for $\sup_{t>0} f_{N,p,\ga,\al}(t)$ (and also for $D_{N,p,\ga,\al}$ by \eqref{eq:Dformulacritical}).
\begin{proposition}\label{chanduoiDcritical}
Let $N \geq 2$, $p\in (1,N)$ and $\al, \ga >0$. We have
\begin{equation}\label{eq:lowerboundD}
\sup_{t>0} f_{N,p,\ga,\al}(t) \geq \max\{1, \al S_{N,p}^{p^*}\}.
\end{equation}
Moreover, if $\ga > p^*$ then $\sup_{t>0} f_{N,p,\ga,\al}(t) > 1$ and if $\ga > p$ then $\sup_{t>0} f_{N,p,\ga,\al}(t) > \al S_{N,p,}^{p^*}$.
\end{proposition}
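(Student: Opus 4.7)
The plan is to extract all three inequalities from the boundary behaviour of $f_{N,p,\ga,\al}$ on $(0,\infty)$. First I would rewrite
\[
f_{N,p,\ga,\al}(t) = (1+t)^{-\frac{p}{\ga}} + \al S_{N,p}^{p^*}\left(\frac{t}{1+t}\right)^{\frac{p^*}{\ga}},
\]
from which $\lim_{t\to 0^+} f_{N,p,\ga,\al}(t) = 1$ and $\lim_{t\to \infty} f_{N,p,\ga,\al}(t) = \al S_{N,p}^{p^*}$ are immediate. Continuity of $f_{N,p,\ga,\al}$ on $(0,\infty)$ then gives $\sup_{t>0} f_{N,p,\ga,\al}(t) \ge \max\{1,\al S_{N,p}^{p^*}\}$, which is the first assertion.

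For the second claim, assume $\ga > p^*$, so $p^*/\ga < 1$. A one-term Taylor expansion at $t=0$ yields
\[
f_{N,p,\ga,\al}(t) - 1 = \al S_{N,p}^{p^*}\, t^{\frac{p^*}{\ga}}\bigl(1+O(t)\bigr) - \frac{p}{\ga}\, t + O(t^2),
\]
and since $p^*/\ga < 1$ the positive fractional-power term $\al S_{N,p}^{p^*}\, t^{p^*/\ga}$ dominates the linear correction $-(p/\ga)t$ as $t\to 0^+$. Hence $f_{N,p,\ga,\al}(t) > 1$ for all sufficiently small $t > 0$, proving $\sup_{t>0} f_{N,p,\ga,\al}(t) > 1$.

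For the third claim, assume $\ga > p$, so $p/\ga < 1$. Expanding directly at $t=\infty$ gives
\[
f_{N,p,\ga,\al}(t) - \al S_{N,p}^{p^*} = t^{-\frac{p}{\ga}}\bigl(1+O(1/t)\bigr) - \frac{\al S_{N,p}^{p^*}\, p^*}{\ga\, t} + O\bigl(1/t^2\bigr),
\]
and the positive term $t^{-p/\ga}$ beats the $1/t$ correction because $p/\ga < 1$. Therefore $f_{N,p,\ga,\al}(t) > \al S_{N,p}^{p^*}$ for all large $t$, which gives the last inequality.

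The argument is really pure one-variable asymptotic analysis, so there is no substantial obstacle. The only delicate point is the comparison of exponents in the two expansions: the thresholds $\ga = p^*$ (for strict dominance near $0$) and $\ga = p$ (for strict dominance near $\infty$) are precisely the boundary cases in these asymptotics, which is why both hypotheses are needed and in fact sharp for this proposition.
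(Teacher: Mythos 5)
Your proof is correct and follows essentially the same route as the paper: both compute the limits $1$ and $\al S_{N,p}^{p^*}$ at the endpoints to get \eqref{eq:lowerboundD}, then use the expansion near $t=0$ (where $t^{p^*/\ga}$ dominates $t$ when $\ga>p^*$) and near $t=\infty$ (where $t^{-p/\ga}$ dominates $t^{-1}$ when $\ga>p$) for the two strict inequalities.
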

\begin{proof}
It is easy to see that
\[
\lim_{t\to 0} f_{N,p,\ga,\al}(t) = 1\qquad\text{\rm and } \qquad \lim_{t\to \infty} f_{N,p,\ga,\al}(t) = \al S_{N,p}^{p^*},
\]
which then imply \eqref{eq:lowerboundD}. If $\ga > p^*$, we have
\[
f_{N,p,\ga,\al}(t) = 1 -\frac p\ga t + \al S_{N,p}^{p^*} t^{\frac{p^*}\ga} + o(t)
\]
when $t \to 0$. Hence, for $t >0$ sufficiently small, we get
\[
\sup_{t>0} f_{N,p,\ga,\al}(t) \geq f_{N,p,\ga,\al}(t) > 1.
\]
If $\ga > p$, we have
\[
f_{N,p,\ga,\al}(t) = (1+ t)^{-\frac p\ga} + \al S_{N,p}^{p^*} -\al S_{N,p}^{p^*} \frac{p^*}\ga (1+ t)^{-1} + o((1+t)^{-1})
\]
when $t \to \infty$. Consequently, for $t >0$ sufficiently large, we get
\[
\sup_{t>0} f_{N,p,\ga,\al}(t) \geq f_{N,p,\ga,\al}(t) > \al S_{N,p}^{p^*}.
\]
\end{proof}

The rest of this section is devoted to prove Theorem \ref{Maximizerscritical}. Our proof is done by a series of lemmatas.
\begin{lemma}\label{nonexistence}
Let $N \geq 2$. If $p \geq N^{\frac12}$ then $D_{N,p,\al,\ga}$ does not attain for any $\al, \ga >0$.
\end{lemma}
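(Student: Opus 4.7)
The plan is to argue by contradiction, exploiting the fact that the upper bound on $D_{N,p,\ga,\al}$ obtained in the proof of Theorem \ref{formulacritical} rests on the sharp Sobolev inequality \eqref{eq:Sobolevinequality}, which by hypothesis is not saturated inside $W^{1,p}(\R^N)$ when $p \geq N^{1/2}$. So suppose some $u \in S_{N,p,\ga}$ attains $D_{N,p,\ga,\al}$.

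First I would verify that $\|u\|_p > 0$ and $\|\na u\|_p > 0$: indeed $\|u\|_p = 0$ gives $u \equiv 0$, contradicting $\|u\|_{W^{1,p}_\ga} = 1$, while $\na u \equiv 0$ forces $u$ to be constant on $\R^N$, and constants in $L^p(\R^N)$ must vanish. Hence $t_u := \|\na u\|_p^\ga/\|u\|_p^\ga$ belongs to $(0,\infty)$, and the reasoning behind \eqref{eq:sosanh} combined with \eqref{eq:Dformulacritical} produces the chain
\[
D_{N,p,\ga,\al} = J_{N,p,\ga,\al}(u) \leq \|u\|_p^p + \al S_{N,p}^{p^*}\|\na u\|_p^{p^*} = f_{N,p,\ga,\al}(t_u) \leq \sup_{t>0} f_{N,p,\ga,\al}(t) = D_{N,p,\ga,\al},
\]
where the middle equality is the algebraic identity expressing $f_{N,p,\ga,\al}$ at $t_u$ on the constraint surface $S_{N,p,\ga}$. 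Every inequality must therefore collapse to an equality; in particular the Sobolev step $\al\|u\|_{p^*}^{p^*} \leq \al S_{N,p}^{p^*}\|\na u\|_p^{p^*}$ is saturated, and since $\al > 0$ and $\na u \not\equiv 0$ this forces $\|u\|_{p^*} = S_{N,p}\|\na u\|_p$. Thus $u$ would be a nonzero extremal of \eqref{eq:Sobolevinequality} actually lying inside $W^{1,p}(\R^N)$.

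To finish, I would invoke the classification of extremals of \eqref{eq:Sobolevinequality} in $\dot W^{1,p}(\R^N)$: up to translation, dilation, and scalar multiplication every extremal coincides with the function $u_*$ of \eqref{eq:optimalSobolev}. A direct computation of the tail gives $u_*(x) \sim |x|^{-(N-p)/(p-1)}$ as $|x|\to\infty$, so $u_* \in L^p(\R^N)$ iff $p(N-p)/(p-1) > N$, equivalently iff $p < N^{1/2}$, and the same dichotomy transfers to every dilate and translate of $u_*$. Consequently, when $p \geq N^{1/2}$ no extremal of \eqref{eq:Sobolevinequality} sits in $W^{1,p}(\R^N)$, contradicting the previous paragraph. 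The only part of the argument that needs a bit of care is tracking the equality cases in the chain displayed above; given Theorem \ref{formulacritical} the remainder is essentially bookkeeping, and I foresee no substantive obstacle.
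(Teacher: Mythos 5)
Your proposal is correct and follows essentially the same route as the paper: assume a maximizer $u$ exists, observe that the Sobolev step in the chain $D_{N,p,\ga,\al}=J_{N,p,\ga,\al}(u)\le f_{N,p,\ga,\al}(\|\na u\|_p^\ga/\|u\|_p^\ga)\le D_{N,p,\ga,\al}$ would have to be an equality, and derive a contradiction from the non-attainability of \eqref{eq:Sobolevinequality} in $W^{1,p}(\R^N)$ when $p\ge N^{1/2}$. The only difference is cosmetic: the paper directly asserts the strict inequality $J_{N,p,\ga,\al}(u)<f_{N,p,\ga,\al}(\cdot)$ by citing the well-known non-attainability, whereas you track the equality cases and then re-derive that fact from the classification of Sobolev extremals and the tail behavior of $u_*$ --- a slightly more self-contained but equivalent argument.
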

\begin{proof}
Suppose that $D_{N,p,\al,\ga}$ is attained by a function $u\in S_{N,p,\ga}$, i.e.,
\[
D_{N,p,\ga,\al} = J_{N,p,\ga,\al}(u).
\]
Since $p \geq N^{1/2}$, then the Sobolev inequality \eqref{eq:Sobolevinequality} does not attains in $W^{1,p}(\R^N)$. Consequently, from the proof of \eqref{eq:aaaa}, we get
\[
J_{N,p,\ga,\al}(u) < f_{N,p,\ga,\al}\lt(\frac{\|\na u\|_p^\ga}{\|u\|_p^\ga}\rt).
\]
Therefore, we get
\[
D_{N,p,\ga,\al} = J_{N,p,\ga,\al}(u) < f_{N,p,\ga,\al}\lt(\frac{\|\na u\|_p^\ga}{\|u\|_p^\ga}\rt) \leq D_{N,p,\ga,\al},
\]
which is impossible. This finishes our proof.
\end{proof}
In the sequel, we only consider $p \in (1,N^{1/2})$. We have the following relation between the attainability of $D_{N,p,\ga,\al}$ and the attainability of $\sup_{t>0} f_{N,p,\ga,\al}(t)$ in $(0,\infty)$.
\begin{lemma}\label{relationcritical}
Let $N \geq 2$, $p \in (1,N^{1/2})$ and $\al, \ga > 0$. Then $D_{N,p,\ga,\al}$ is attained in $S_{N,p,\ga}$ if and only if $\sup_{t>0} f_{N,p,\ga,\al}(t)$ is attained in $(0,\infty)$.
\end{lemma}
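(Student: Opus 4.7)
The proof will be a short two-way argument that turns entirely on two facts already established in the excerpt: the pointwise inequality \eqref{eq:sosanh}, namely $J_{N,p,\ga,\al}(u)\le f_{N,p,\ga,\al}(\|\na u\|_p^\ga/\|u\|_p^\ga)$ for every $u\in S_{N,p,\ga}$, together with the identity \eqref{eq:dangthuc}, $J_{N,p,\ga,\al}(w_\lam)=f_{N,p,\ga,\al}\bigl(\lam^{\ga/N}\|\na u_*\|_p^\ga/\|u_*\|_p^\ga\bigr)$, which is available only because the hypothesis $p<N^{1/2}$ places $u_*$ in $W^{1,p}(\R^N)$ and makes the Sobolev inequality \eqref{eq:Sobolevinequality} an equality at $u_*$. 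Combined with Theorem \ref{formulacritical}, these two facts squeeze $D_{N,p,\ga,\al}$ against the pointwise supremum.

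For the forward direction, I would assume that $D_{N,p,\ga,\al}$ is attained at some $u\in S_{N,p,\ga}$. Setting $t_0:=\|\na u\|_p^\ga/\|u\|_p^\ga$, a quick check confirms $t_0\in(0,\infty)$: the normalization $\|\na u\|_p^\ga+\|u\|_p^\ga=1$ forbids $u\equiv 0$, and neither $\|u\|_p=0$ (which forces $u\equiv 0$) nor $\|\na u\|_p=0$ (which forces $u$ constant, hence $0$ in $W^{1,p}(\R^N)$) is possible. Chaining \eqref{eq:sosanh} with Theorem \ref{formulacritical} gives
\[
D_{N,p,\ga,\al}=J_{N,p,\ga,\al}(u)\le f_{N,p,\ga,\al}(t_0)\le \sup_{t>0}f_{N,p,\ga,\al}(t)=D_{N,p,\ga,\al},
\]
so both inequalities are equalities, and $\sup_{t>0}f_{N,p,\ga,\al}(t)$ is attained at $t_0$.

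For the reverse direction, suppose $\sup_{t>0}f_{N,p,\ga,\al}(t)$ is attained at some $t_0\in(0,\infty)$. Since $p<N^{1/2}$, the function $u_*$ given by \eqref{eq:optimalSobolev} lies in $W^{1,p}(\R^N)$, so $w_\lam$ defined by \eqref{eq:newfunction} is a legitimate element of $S_{N,p,\ga}$ for every $\lam>0$. The map $\lam\mapsto \lam^{\ga/N}\|\na u_*\|_p^\ga/\|u_*\|_p^\ga$ is a continuous bijection from $(0,\infty)$ onto $(0,\infty)$, so I can choose $\lam_0$ to realize $t_0$ as its image. Invoking \eqref{eq:dangthuc} at this $\lam_0$ gives $J_{N,p,\ga,\al}(w_{\lam_0})=f_{N,p,\ga,\al}(t_0)=\sup_{t>0}f_{N,p,\ga,\al}(t)=D_{N,p,\ga,\al}$ by Theorem \ref{formulacritical}, so $w_{\lam_0}$ is a maximizer.

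There is no real obstacle; the only subtlety worth flagging is that \eqref{eq:dangthuc} genuinely requires the equality case of the Sobolev inequality \eqref{eq:Sobolevinequality}, which is why the assumption $p<N^{1/2}$ is essential, and that a priori estimate showing $\|u\|_p$ and $\|\na u\|_p$ are both strictly positive for any $u\in S_{N,p,\ga}$ is what keeps the ratio $t_0$ inside the open interval $(0,\infty)$ rather than at an endpoint.
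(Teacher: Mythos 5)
Your proof is correct and follows essentially the same route as the paper: the forward direction chains \eqref{eq:sosanh} with \eqref{eq:Dformulacritical} to force equality at $t_0=\|\na u\|_p^\ga/\|u\|_p^\ga$, and the reverse direction picks $\lam$ so that $w_\lam$ from \eqref{eq:newfunction} realizes $t_0$ via \eqref{eq:dangthuc}. Your added check that $\|u\|_p$ and $\|\na u\|_p$ are both strictly positive, so that $t_0$ lies in the open interval, is a small point the paper leaves implicit.
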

\begin{proof}
Suppose that $D_{N,p,\ga,\al}$ is attained by a function $u\in S_{N,p,\ga,\al}$. It follows from \eqref{eq:Dformulacritical} and \eqref{eq:sosanh} that
\[
D_{N,p,\ga,\al} =J_{N,p,\ga,\al}(u) \leq f_{N,p,\ga,\al}\lt(\frac{\|\na u\|_p^\ga}{\|u\|_p^\ga}\rt) \leq D_{N,p,\ga,\al}.
\]
Consequently, we get
\[
f_{N,p,\ga,\al}\lt(\frac{\|\na u\|_p^\ga}{\|u\|_p^\ga}\rt) = D_{N,p,\ga,\al},
\]
that is $f_{N,p,\ga,\al}$ attains its maximum value at $\|\na u\|_p^\ga/ \|u\|_p^\ga \in (0, \infty)$.

Conversely, suppose that $\sup_{t>0} f_{N,p,\ga,\al}(t)$ is attained at $t_0\in (0,\infty)$. Choosing $\lam >0$ such that the function $w_\lam$ defined by \eqref{eq:newfunction} (note that $u_*\in W^{1,p}(\R^N)$ since $p < N^{1/2}$) satisfies 
$\lam^{\frac \ga N} \frac{\|\na u_*\|_p^\ga}{\|u_*\|_p^\ga} =t_0$. It follows from \eqref{eq:Dformulacritical} and \eqref{eq:dangthuc} that
\[
J_{N,p,\ga,\al}(w_\lam) = f_{N,p,\ga,\al}\lt(\lam^{\frac \ga N} \frac{\|\na u_*\|_p^\ga}{\|u_*\|_p^\ga}\rt) = f_{N,p,\ga,\al}(t_0) = \sup_{t >0} f_{N,p,\ga,\al}(t) =D_{N,p,\ga,\al},
\]
that is $D_{N,p,\ga,\al}$ is attained by $w_\lam$.
\end{proof}

We next prove a non-existence result when $\al < \al_{N,p}(\ga)$ as follows.
\begin{lemma}\label{kotontai}
Let $N \geq 2$, $p \in (1,N^{1/2})$ and $\ga > 0$. If $\al < \al_{N,p}(\ga)$ then $D_{N,p,\ga,\al}$  does not attains in $S_{N,p,\ga}$. Moreover $D_{N,p,\ga,\al} =1$.
\end{lemma}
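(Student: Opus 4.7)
The plan is to use the explicit formulas from Theorem \ref{formulacritical} and the reduction Lemma \ref{relationcritical}, so that the whole question becomes the elementary one of whether $\sup_{t>0} f_{N,p,\ga,\al}(t)$ is attained on $(0,\infty)$. The main idea is to rewrite $f_{N,p,\ga,\al}(t)-1$ in terms of $g_{N,p,\ga}(t)$ and exploit the strict inequality $\al S_{N,p}^{p^*} < g_{N,p,\ga}(t)$ which holds for every $t>0$ under the assumption $\al<\al_{N,p}(\ga)$.

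First, I would compute directly
\[
f_{N,p,\ga,\al}(t) - 1 = \frac{(1+t)^{\frac{p^*-p}{\ga}} + \al S_{N,p}^{p^*} t^{\frac{p^*}{\ga}} - (1+t)^{\frac{p^*}{\ga}}}{(1+t)^{\frac{p^*}{\ga}}}
= \frac{t^{\frac{p^*}{\ga}}}{(1+t)^{\frac{p^*}{\ga}}}\bigl[\al S_{N,p}^{p^*} - g_{N,p,\ga}(t)\bigr],
\]
where the last equality uses $(1+t)^{(p^*-p)/\ga}-(1+t)^{p^*/\ga}=-(1+t)^{(p^*-p)/\ga}\bigl[(1+t)^{p/\ga}-1\bigr]$ and then the definition of $g_{N,p,\ga}$. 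Next, from \eqref{eq:alphaformulacritical}, the hypothesis $\al<\al_{N,p}(\ga)$ means $\al S_{N,p}^{p^*} < \inf_{t>0} g_{N,p,\ga}(t)$, so $\al S_{N,p}^{p^*}-g_{N,p,\ga}(t)<0$ for every $t>0$. Plugging back into the identity above yields $f_{N,p,\ga,\al}(t)<1$ for all $t\in(0,\infty)$.

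Combining this with the boundary behavior $\lim_{t\to 0^+}f_{N,p,\ga,\al}(t)=1$ (already noted in Proposition \ref{chanduoiDcritical}), I conclude that $\sup_{t>0}f_{N,p,\ga,\al}(t)=1$ and that this supremum is not attained at any point of $(0,\infty)$. By \eqref{eq:Dformulacritical} this gives $D_{N,p,\ga,\al}=1$, and by Lemma \ref{relationcritical} the non-attainability of $\sup_{t>0}f_{N,p,\ga,\al}(t)$ in $(0,\infty)$ is equivalent to the non-attainability of $D_{N,p,\ga,\al}$ in $S_{N,p,\ga}$, finishing the proof.

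I do not foresee a real obstacle: the algebraic rewriting of $f_{N,p,\ga,\al}-1$ in terms of $g_{N,p,\ga}$ is the one possibly unobvious step, but once this identity is written down the argument becomes an immediate strict inequality combined with the $t\to 0^+$ limit. The key point to emphasise is that Lemma \ref{relationcritical} is what makes this elementary computation sufficient in place of any concentration--compactness analysis on $W^{1,p}(\R^N)$.
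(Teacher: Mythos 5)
Your proof is correct and follows essentially the same route as the paper: the paper's first line is precisely the strict inequality $\al S_{N,p}^{p^*} t^{p^*/\ga} < (1+t)^{p^*/\ga}-(1+t)^{(p^*-p)/\ga}$ for all $t>0$, which is your identity $f_{N,p,\ga,\al}(t)-1 = t^{p^*/\ga}(1+t)^{-p^*/\ga}\bigl[\al S_{N,p}^{p^*}-g_{N,p,\ga}(t)\bigr]<0$ in disguise, and the conclusion is then drawn exactly as you do via $\lim_{t\to 0^+}f_{N,p,\ga,\al}(t)=1$, \eqref{eq:Dformulacritical} and Lemma \ref{relationcritical}. No gaps; you merely spell out the algebraic rewriting more explicitly than the paper does.
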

\begin{proof}
Since $\al < \al_{N,p}(\ga)$ then $\al S_{N,p}^{p^*} t^{p^*/\ga} < (1+t)^{p^*/\ga} -(1+t)^{(p^*-p)/\ga}$ for any $t >0$. Hence
\[
f_{N,p,\ga,\al}(t) < 1 = \lim_{t\to 0} f_{N,p,\ga,\al}(t) \leq \sup_{t>0} f_{N,p,\ga,\al}(t)\qquad\forall\, t>0.
\]
These estimates implies that $\sup_{t>0} f_{N,p,\ga,\al}(t)$ does not attains in $(0,\infty)$ and hence $D_{N,p,\ga,\al}$ does not attains in $S_{N,p,\ga}$ by Lemma \ref{relationcritical}. Moreover, we get from the above estimates that $D_{N,p,\ga,\al} = \sup_{t>0} f_{N,p,\ga,\al}(t) =1$ by \eqref{eq:Dformulacritical}. 

\end{proof}
\begin{lemma}\label{case1}
Let $N \geq 2$, $p\in (1,N^{1/2})$ and $\al, \ga >0$. If $\ga > p^*$, then $\al_{N,p}(\ga) =0$ and $D_{N,p,\ga,\al}$ is attained for any $\al >0$. 
\end{lemma}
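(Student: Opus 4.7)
The plan is to deduce both conclusions of Lemma \ref{case1} from the explicit formulas \eqref{eq:Dformulacritical}--\eqref{eq:alphaformulacritical}, the criterion in Lemma \ref{relationcritical}, and the boundary estimates in Proposition \ref{chanduoiDcritical}. No rearrangement or concentration-compactness arguments should be necessary: once $\sup_{t>0} f_{N,p,\gamma,\alpha}(t)$ is strictly larger than the limiting values $f(0^+)$ and $f(\infty)$, continuity forces the supremum to be attained in $(0,\infty)$.

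First I would prove $\alpha_{N,p}(\gamma) = 0$ by inspecting the function $g_{N,p,\gamma}(t) = (1+t)^{(p^*-p)/\gamma}\bigl((1+t)^{p/\gamma}-1\bigr)/t^{p^*/\gamma}$ as $t \to 0^+$. Using the Taylor expansion $(1+t)^{p/\gamma} - 1 = \tfrac{p}{\gamma} t + o(t)$ and the fact that $(1+t)^{(p^*-p)/\gamma} \to 1$, one finds
\[
g_{N,p,\gamma}(t) = \frac{p}{\gamma}\, t^{\,1 - p^*/\gamma}\bigl(1 + o(1)\bigr)\qquad (t \to 0^+).
\]
Since $\gamma > p^*$, the exponent $1 - p^*/\gamma$ is strictly positive, so $g_{N,p,\gamma}(t) \to 0$. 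Hence $\inf_{t>0} g_{N,p,\gamma}(t) = 0$, and \eqref{eq:alphaformulacritical} yields $\alpha_{N,p}(\gamma) = 0$.

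Next, to show attainability of $D_{N,p,\gamma,\alpha}$ for every $\alpha > 0$, I invoke Lemma \ref{relationcritical} and reduce to checking that $f_{N,p,\gamma,\alpha}(t) = (1+t)^{-p/\gamma} + \alpha S_{N,p}^{p^*}\, t^{p^*/\gamma}(1+t)^{-p^*/\gamma}$ attains its supremum on $(0,\infty)$. A direct computation of the limits gives
\[
\lim_{t\to 0^+} f_{N,p,\gamma,\alpha}(t) = 1 \quad \text{and} \quad \lim_{t\to\infty} f_{N,p,\gamma,\alpha}(t) = \alpha S_{N,p}^{p^*}.
\]
Since $\gamma > p^*$ (in particular $\gamma > p$), both conclusions of Proposition \ref{chanduoiDcritical} apply, so
\[
\sup_{t>0} f_{N,p,\gamma,\alpha}(t) > \max\bigl\{1,\ \alpha S_{N,p}^{p^*}\bigr\}.
\]
Thus the supremum strictly exceeds both limiting values. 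By continuity of $f_{N,p,\gamma,\alpha}$ on $(0,\infty)$, any maximizing sequence $\{t_n\}$ must stay in a compact subinterval $[a,b] \subset (0,\infty)$, and extracting a convergent subsequence produces $t_0 \in (0,\infty)$ with $f_{N,p,\gamma,\alpha}(t_0) = \sup_{t>0} f_{N,p,\gamma,\alpha}(t)$. Lemma \ref{relationcritical} then yields an explicit maximizer $w_\lambda \in S_{N,p,\gamma}$ of $D_{N,p,\gamma,\alpha}$, with $\lambda > 0$ chosen so that $\lambda^{\gamma/N}\|\nabla u_*\|_p^\gamma/\|u_*\|_p^\gamma = t_0$.

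No step here is a serious obstacle; the only real input is the asymptotic $g_{N,p,\gamma}(t) \to 0$ at the origin, which is what forces $\alpha_{N,p}(\gamma) = 0$ precisely in the regime $\gamma > p^*$. The rest is bookkeeping built on Proposition \ref{chanduoiDcritical} and Lemma \ref{relationcritical}.
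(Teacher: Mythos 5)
Your proof is correct, and the first half (showing $\alpha_{N,p}(\gamma)=0$ from $g_{N,p,\gamma}(t)\sim \frac{p}{\gamma}t^{1-p^*/\gamma}\to 0$ as $t\to 0^+$) coincides with the paper's argument. For the attainability of $\sup_{t>0}f_{N,p,\gamma,\alpha}(t)$, however, you take a genuinely different and somewhat slicker route: you combine the two strict inequalities of Proposition \ref{chanduoiDcritical} (both available since $\gamma>p^*>p$) with the boundary limits $f(0^+)=1$ and $f(\infty)=\alpha S_{N,p}^{p^*}$, so that any maximizing sequence is trapped in a compact subinterval and the supremum is attained by continuity. The paper instead computes $f_{N,p,\gamma,\alpha}'$ explicitly, factors it as $\frac{p}{\gamma(1+t)^2}h_{N,p,\gamma,\alpha}(t)$ with $h_{N,p,\gamma,\alpha}$ strictly decreasing from $+\infty$ to $-\infty$ when $\gamma>p^*$, and deduces that $f$ has a \emph{unique} interior critical point which is its maximum. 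Your argument is shorter and reuses machinery already established, which is a genuine economy; what it does not deliver is the uniqueness of the maximizing $t_0$, which the paper's derivative analysis provides and which is later exploited (in the classification of maximizers, Proposition \ref{eq:classify}) to pin down the dilation parameter $\lambda$ uniquely. For the statement of Lemma \ref{case1} as written, only existence is required, so your proof is complete.
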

\begin{proof}
Since $\ga > p^*$, we then have $\lim\limits_{t\to 0} g_{N,p,\ga}(t) = 0$. Hence $\al_{N,p}(\ga) =0$. 

An easy computation shows that
\[
f'_{N,p,\ga,\al}(t) = \frac{p}{\ga(1+t)^2}\lt(-(1+t)^{1-\frac p\ga} + \frac{p^*}p \al S_{N,p}^{p^*} \lt(\frac t{t+1}\rt)^{\frac{p^*}\ga -1}\rt)=: \frac{p}{\ga(1+t)^2} h_{N,p,\ga,\al}(t).
\]
Since $\ga > p^*$, then $h_{N,p,\ga,\al}$ is strictly decreasing function on $(0,\infty)$ with $\lim\limits_{t\to 0} h_{N,p,\ga,\al}(t) = \infty$ and $\lim\limits_{t\to \infty} h_{N,p,\ga,\al}(t) = -\infty$. There exists unique $t_0\in (0,\infty)$ such that $h_{N,p,\ga,\al}(t_0) =0$, $h_{N,p,\ga,\al} > 0$ on $(0,t_0)$ and $h_{N,p,\ga,\al} < 0$ on $(t_0,\infty)$. $h_{N,p,\ga,\al}$ and $f_{N,p,\ga,\al}'$ has the same sign on $(0,\infty)$ which implies $f_{N,p,\ga,\al}$ attains it maximum value at unique point $t_0\in (0,\infty)$. Our conclusion hence is followed from Lemma \ref{relationcritical}.

\end{proof}
\begin{lemma}\label{case2}
Let $N \geq 2$, $p\in (1,N^{1/2})$ and $\al, \ga >0$. If $\ga = p^*$, then $\al_{N,p}(p^*) =p/(p^* S_{N,p}^{p^*})$ and $D_{N,p,\ga,\al}$ is attained for any $\al > \al_{N,p}(p^*)$ while it is not attained for any $\al \leq \al_{N,p}(p^*)$. 
\end{lemma}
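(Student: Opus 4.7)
The plan is to reduce everything, via Lemma \ref{relationcritical}, to an elementary one-variable calculus analysis of $f_{N,p,p^*,\alpha}$ in the specialized form
\[
f_{N,p,p^*,\al}(t) = (1+t)^{-\frac{p}{p^*}} + \al S_{N,p}^{p^*}\,\frac{t}{1+t},
\]
and, in parallel, to directly compute the infimum of $g_{N,p,p^*}$ to pin down $\al_{N,p}(p^*)$.

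First I would treat $\al_{N,p}(p^*)$. With $\ga = p^*$ the function $g_{N,p,p^*}(t) = (1+t)^{(p^*-p)/p^*}\,\bigl((1+t)^{p/p^*}-1\bigr)/t$ simplifies, after expanding the product, to
\[
g_{N,p,p^*}(t) = \frac{(1+t) - (1+t)^{(p^*-p)/p^*}}{t}.
\]
Its limit at $t\to 0^+$ is $p/p^*$ by a Taylor expansion, so $\inf g_{N,p,p^*} \le p/p^*$. For the reverse inequality, I set $h(t) := (1+t) - (1+t)^{(p^*-p)/p^*} - (p/p^*)\,t$ and check that $h(0) = h'(0) = 0$ and $h''(t) > 0$ on $(0,\infty)$ (since $p/p^* \in (0,1)$), so $h(t) > 0$ for $t > 0$. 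This yields $g_{N,p,p^*}(t) > p/p^*$ on $(0,\infty)$, hence $\inf g_{N,p,p^*} = p/p^*$ (not attained), and \eqref{eq:alphaformulacritical} gives $\al_{N,p}(p^*) = p/(p^* S_{N,p}^{p^*})$.

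Next I would analyze the monotonicity of $f_{N,p,p^*,\al}$. A direct differentiation yields
\[
(1+t)^2\, f'_{N,p,p^*,\al}(t) = \al S_{N,p}^{p^*} - \frac{p}{p^*}\,(1+t)^{\frac{p^*-p}{p^*}},
\]
and the right-hand side is strictly decreasing in $t$ on $(0,\infty)$, running from $\al S_{N,p}^{p^*} - p/p^*$ at $t=0^+$ down to $-\infty$ as $t\to\infty$. Thus $f'_{N,p,p^*,\al}$ has at most one zero, and the sign can only change from positive to negative. Two cases:

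\emph{Case $\al > \al_{N,p}(p^*)$.} Then $(1+t)^2 f'_{N,p,p^*,\al}(t)\big|_{t=0^+} = \al S_{N,p}^{p^*} - p/p^* > 0$, so by the intermediate value theorem there is a unique $t_0\in(0,\infty)$ with $f'_{N,p,p^*,\al}(t_0) = 0$, and $f_{N,p,p^*,\al}$ strictly increases on $(0,t_0)$ and strictly decreases on $(t_0,\infty)$. In particular $\sup_{t>0} f_{N,p,p^*,\al} = f_{N,p,p^*,\al}(t_0)$ is attained in $(0,\infty)$, and Lemma \ref{relationcritical} then yields that $D_{N,p,p^*,\al}$ is attained.

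\emph{Case $\al \le \al_{N,p}(p^*)$.} Then $(1+t)^2 f'_{N,p,p^*,\al}(t)\big|_{t=0^+} \le 0$, and strict monotonicity of $(1+t)^2 f'_{N,p,p^*,\al}$ forces $f'_{N,p,p^*,\al}(t) < 0$ for all $t>0$. Hence $f_{N,p,p^*,\al}$ is strictly decreasing on $(0,\infty)$, with $\lim_{t\to 0^+} f_{N,p,p^*,\al}(t) = 1$, so $\sup_{t>0} f_{N,p,p^*,\al}(t) = 1$ but the supremum is not attained on $(0,\infty)$. Again by Lemma \ref{relationcritical}, $D_{N,p,p^*,\al}$ is not attained.

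I do not foresee a serious obstacle here: the only nontrivial step is the sign analysis of $h$ used to identify $\al_{N,p}(p^*)$, and that reduces to the convexity of $(1+t)^{(p^*-p)/p^*}$; everything else is a direct consequence of the explicit formulas \eqref{eq:Dformulacritical}--\eqref{eq:alphaformulacritical} and Lemma \ref{relationcritical}.
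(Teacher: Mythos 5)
Your proof is correct, and the overall strategy is the same as the paper's: everything is funneled through the explicit formulas \eqref{eq:Dformulacritical}--\eqref{eq:alphaformulacritical} and Lemma \ref{relationcritical}, so that only one-variable calculus remains. The execution of that calculus differs in a worthwhile way. For $\al_{N,p}(p^*)$, the paper differentiates $g_{N,p,p^*}$ and invokes the arithmetic--geometric mean inequality to show $g_{N,p,p^*}'>0$, whereas you compare $g_{N,p,p^*}$ directly with its limit $p/p^*$ at $0$ via the convexity of $h(t)=(1+t)-(1+t)^{(p^*-p)/p^*}-(p/p^*)t$; both give the strict inequality $g_{N,p,p^*}(t)>p/p^*$ that is needed for the non-attainment at $\al=\al_{N,p}(p^*)$ (and your route incidentally sidesteps a typo in the paper, which writes $\lim_{t\to\infty}$ where $\lim_{t\to 0}$ is meant). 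For the attainment when $\al>\al_{N,p}(p^*)$, the paper produces a single point $t_1$ with $f_{N,p,p^*,\al}(t_1)>1$ and then appeals to Proposition \ref{chanduoiDcritical} to rule out the supremum escaping to $t\to\infty$, while you exploit the fact that for $\ga=p^*$ the second term of $f_{N,p,p^*,\al}$ is just $\al S_{N,p}^{p^*}t/(1+t)$, so that $(1+t)^2f'_{N,p,p^*,\al}(t)=\al S_{N,p}^{p^*}-\tfrac{p}{p^*}(1+t)^{(p^*-p)/p^*}$ is strictly decreasing and changes sign at most once. Your sign analysis is slightly more self-contained (it does not need Proposition \ref{chanduoiDcritical}) and yields for free that the maximum is attained at a unique $t_0$, which is relevant to the classification of maximizers in Proposition \ref{eq:classify}; the paper's argument is softer and generalizes more readily to the other values of $\ga$ where $f'$ is not so tractable.
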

\begin{proof}
Suppose that $\ga = p^*$, we then have
\[
g_{N,p,p^*}(t) = \frac{1+ t} t -\frac{(1+t)^{1-\frac p{p^*}}}{t},
\]
and hence
\begin{align*}
g_{N,p,p^*}'(t) &= -\frac1 {t^2} - \lt(1 -\frac p{p^*}\rt) \frac{(1+t)^{-\frac p{p^*}}}t + \frac{(1+ t)^{1 -\frac p{p^*}}}{t^2}\\
&=\frac{1}{t^2} \lt(\frac p{p^*} (1+t)^{1-\frac p{p^*}} + \lt(1 -\frac p{p^*}\rt) (1+t)^{-\frac p{p^*}} -1\rt).
\end{align*}
By the arithmetic--geometric inequality we get $g_{N,p,p^*}'(t) > 0$ for any $t >0$. This implies that $g_{N,p,p^*}$ is strictly increasing function on $(0,\infty)$ and hence
\begin{equation}\label{eq:cccc}
\al_{N,p}(p^*) =\frac1{S_{N,p}^{p^*}}\lim_{t\to \infty} g_{N,p,p^*}(t) = \frac{p}{p^* S_{N,p}^{p^*}}\,\,\,\text{\rm and }\,\, \al_{N,p}(p^*)S_{N,p}^{p^*}<  g_{N,p,p^*}(t)\quad\forall\, t>0.
\end{equation}

If $\al > \al_{N,p}(p^*)$, then we can take $t_1\in (0,\infty)$ such that $\al S_{N,p}^{p^*} > g_{N,p,p^*}(t_1)$ which implies $\sup_{t>0} f_{N,p,p^*,\al}(t) \geq f_{N,p,p^*,\al}(t_1) > 1$. Proposition \ref{chanduoiDcritical} yields $\sup_{t>0} f_{N,p,p^*,\al}(t) > \al S_{N,p}^{p^*}$ since $\ga =p^* > p$. Hence $f_{N,p,p^*,\al}$ attains it maximum value in $(0,\infty)$. By Lemma \ref{relationcritical}, $D_{N,p,p^*,\al}$ is attained in $S_{N,p,p^*}$.

We next consider the case $\al \leq \al_{N,p}(p^*)$. It remains to prove for $\al = \al_{N,p}(p^*)$ by Lemma \ref{kotontai}. It follows from \eqref{eq:cccc} that $\al_{N,p}(p^*) S_{N,p}(p^*) < g_{N,p,p^*}(t)$ for any $t>0$ which implies $f_{N,p,p^*, \al_{N,p}(p^*)}(t) < 1 \leq \sup_{t>0} f_{N,p,p^*,\al_{N,p}(p^*)}(t)$ for any $t >0$ by Proposition \ref{chanduoiDcritical}. Consequently, $\sup_{t>0} f_{N,p,p^*,\al_{N,p}(p^*)}(t)$ is not attained in $(0,\infty)$. By Lemma \ref{relationcritical}, $D_{N,p,p^*,\al_{N,p}(p^*)}$ is not attained. Moreover, the previous estimate and \eqref{eq:Dformulacritical} yields $D_{N,p,p^*,\al_{N,p}(p^*)} =1$.
\end{proof}

\begin{lemma}\label{case3}
Let $N \geq 2$, $p\in (1,N^{1/2})$ and $\al, \ga >0$. If $p < \ga <p^*$, then $D_{N,p,\ga,\al}$ is attained for any $\al \geq \al_{N,p}(\ga)$ while it is not attained for any $\al < \al_{N,p}(\ga)$. Moreover, we have $D_{N,p,\ga,\al} =1$ if $\al \leq \al_{N,p}(\ga)$.
\end{lemma}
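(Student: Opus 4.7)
The plan is to apply Lemma \ref{relationcritical}, which reduces the question of whether $D_{N,p,\ga,\al}$ is attained to whether $\sup_{t>0} f_{N,p,\ga,\al}(t)$ is attained on $(0,\infty)$. The key algebraic identity linking $f$ and $g$ is
\[
f_{N,p,\ga,\al}(t) - 1 = \frac{t^{p^*/\ga}}{(1+t)^{p^*/\ga}}\lt(\al S_{N,p}^{p^*} - g_{N,p,\ga}(t)\rt),
\]
so $f_{N,p,\ga,\al}(t) > 1$ iff $g_{N,p,\ga}(t) < \al S_{N,p}^{p^*}$. Coupled with $\lim_{t\to 0^+} f_{N,p,\ga,\al}(t) = 1$ and $\lim_{t\to\infty} f_{N,p,\ga,\al}(t) = \al S_{N,p}^{p^*}$, this is the bridge between $g_{N,p,\ga}$ and $f_{N,p,\ga,\al}$.

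First I would analyze $g_{N,p,\ga}$ on $(0,\infty)$. Clearly $g_{N,p,\ga} > 0$. Since $\ga < p^*$, the expansion $(1+t)^{p/\ga} - 1 \sim (p/\ga)t$ as $t\to 0^+$ gives $g_{N,p,\ga}(t) \sim (p/\ga)\, t^{1-p^*/\ga} \to \infty$. As $t\to\infty$, setting $u=1/t$ and using $g_{N,p,\ga}(t) = (1+u)^{p^*/\ga} - u^{p/\ga}(1+u)^{(p^*-p)/\ga}$, I would derive
\[
g_{N,p,\ga}(t) = 1 + \frac{p^*}{\ga} u - u^{p/\ga} + o(u^{p/\ga}), \qquad u=1/t \to 0^+.
\]
Since $p/\ga < 1$, the term $-u^{p/\ga}$ dominates the linear correction, so $g_{N,p,\ga}(t) < 1$ for all sufficiently large $t$. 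By continuity, the infimum of $g_{N,p,\ga}$ is attained at some $t_0\in(0,\infty)$, and $\al_{N,p}(\ga)S_{N,p}^{p^*} = g_{N,p,\ga}(t_0) \in (0,1)$.

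With this in hand, the three subcases are short. The case $\al<\al_{N,p}(\ga)$ is covered by Lemma \ref{kotontai}, which gives non-attainment and $D_{N,p,\ga,\al}=1$. For $\al=\al_{N,p}(\ga)$, the identity above together with $g_{N,p,\ga} \geq g_{N,p,\ga}(t_0)=\al S_{N,p}^{p^*}$ yields $f_{N,p,\ga,\al}(t)\leq 1 = f_{N,p,\ga,\al}(t_0)$ for all $t>0$; hence the supremum equals $1$ and is attained at $t_0$, and Lemma \ref{relationcritical} gives that $D_{N,p,\ga,\al}=1$ is attained. For $\al>\al_{N,p}(\ga)$, there exists $t_1$ with $g_{N,p,\ga}(t_1)<\al S_{N,p}^{p^*}$, so $f_{N,p,\ga,\al}(t_1)>1$. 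Since $\ga>p$, Proposition \ref{chanduoiDcritical} gives $\sup_{t>0} f_{N,p,\ga,\al}(t) > \al S_{N,p}^{p^*}$, so $\sup f_{N,p,\ga,\al} > \max\{1,\al S_{N,p}^{p^*}\}$; as the limits at $0$ and $\infty$ are $1$ and $\al S_{N,p}^{p^*}$, continuity forces the supremum to be attained, and Lemma \ref{relationcritical} transfers attainment to $D_{N,p,\ga,\al}$.

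The main obstacle is the asymptotic analysis at infinity showing $\inf g_{N,p,\ga} < 1$: the decisive point is that $p<\ga$, which makes $u^{p/\ga}$ dominate the linear correction as $u\to 0^+$. Without this strict inequality, one could only obtain $\inf g_{N,p,\ga}\leq 1$, attainment of the infimum would be unclear, and the transition at $\al=\al_{N,p}(\ga)$ would degenerate; everything else in the proof is a routine consequence of the $f$--$g$ identity and the behavior of $f_{N,p,\ga,\al}$ at $0$ and $\infty$.
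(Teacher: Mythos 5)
Your proof is correct, and it stays within the paper's framework (reduce via Lemma \ref{relationcritical}, then study $f_{N,p,\ga,\al}$ through $g_{N,p,\ga}$), but the execution is genuinely leaner than the paper's. The paper substitutes $s=t/(1+t)$ and runs a monotonicity analysis of $l_{N,p,\ga}$ and $m_{N,p,\ga}$ to show $g_{N,p,\ga}$ has a \emph{unique} interior minimum $t_2$, and then, for $\al>\al_{N,p}(\ga)$, carries out an elaborate induction on the higher derivatives $k_{N,p,\ga,\al}^{(i)}$ to show $f$ attains its maximum at a \emph{unique} point; you instead get attainment of $\inf g$ from the endpoint asymptotics ($g\to\infty$ at $0$, $g=1-u^{p/\ga}+o(u^{p/\ga})<1$ near $\infty$, so both limits exceed the infimum) and get attainment of $\sup f$ from $\sup f>\max\{1,\al S_{N,p}^{p^*}\}$ together with the limits of $f$ at $0$ and $\infty$. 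The paper itself remarks, right after its proof, that exactly this shorter argument suffices for attainment and that the long derivative computation is only there to establish uniqueness of the maximizing $t$, which is used later to classify all maximizers of $D_{N,p,\ga,\al}$ (Proposition \ref{eq:classify}) and in the continuity proof for $\al_{N,p}(\ga)$. So your proof fully establishes the lemma as stated; just be aware it does not deliver the uniqueness information the paper extracts in passing. Your explicit identity $f_{N,p,\ga,\al}(t)-1=\frac{t^{p^*/\ga}}{(1+t)^{p^*/\ga}}\bigl(\al S_{N,p}^{p^*}-g_{N,p,\ga}(t)\bigr)$ is a nice way to make the $f$--$g$ bridge transparent; the paper uses the same comparison but only implicitly.
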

\begin{proof}
By Lemma \ref{kotontai}, it remains to consider the case $\al \geq \al_{N,p}(p^*)$. Making the change of variable $s = t/(1+t) \in (0,1)$, we get
\[
f_{N,p,\ga,\al}(t) = (1-s)^{\frac{p}\ga}  + \al S_{N,p}^{p^*} s^{\frac{p^*}\ga} = :k_{N,p,\ga,\al}(s),
\]
and
\[
g_{N,p,\ga}(t) = s^{-\frac{p^*}\ga} -s^{-\frac{p^*}\ga}(1-s)^{\frac p\ga} = :l_{N,p,\ga}(s),
\]
with $s\in (0,1)$. We have 
\begin{align*}
l'_{N,p,\ga}(s) &= -\frac{p^*}\ga s^{-\frac {p^*}\ga -1} + \frac{p^*}\ga s^{-\frac {p^*}\ga -1}(1-s)^{\frac p\ga} + \frac p\ga s^{-\frac{p^*}\ga}(1-s)^{\frac p\ga -1} \\
&=\frac{p^*}\ga s^{-\frac {p^*}\ga -1} \lt(\lt(1 -\frac p{p^*}\rt) (1-s)^{\frac p\ga} + \frac p{p^*}(1-s)^{\frac p\ga -1} -1\rt)\\
&=: \frac{p^*}\ga s^{-\frac {p^*}\ga -1}\, m_{N,p,\ga}(s),
\end{align*}
and
\[
m_{N,p,\ga}'(s)= -\frac p\ga (1-s)^{\frac p\ga -2} \lt(\frac{p^*-\ga}{p^*} - \frac{p^*-p}{p^*} s\rt). 
\]
Consequently, $m'_{N,p,\ga} =0$ at $s_1 =(p^* -\ga)/(p^*-p) \in (0,1)$, $m'_{N,p,\ga} < 0$ on $(0,s_1)$ and $m'_{N,p,\ga} >0$ on $(s_1,1)$. Since $\lim_{s\to 0} m_{N,p,\ga}(s) = 0$, then there exists unique $s_2 \in (s_1,1)$ such that $m_{N,p,\ga}(s_2) =0$, $m_{N,p,\ga} < 0$ on $(0,s_2)$ and $m_{N,p,\ga} >0$ on $(s_2,1)$. $l'_{N,p,\ga}$ and $m_{N,p,\ga}$ have the same sign, hence $l_{N,p,\ga}$ attains uniquely its minimum value at $s_2$ which is equivalent that $g_{N,p,\ga}$ attains uniquely its minimum value at $t_2 =s_2/(1-s_2) > s_1/(1-s_1) = (p^*-\ga)/(\ga -p)$.

If $\al > \al_{N,p}(\ga) = S_{N,p}^{-p^*} l_{N,p,\ga}(s_2)$, then we have
\begin{equation}\label{eq:1111}
k_{N,p,\ga,\al}(s_2) = (1-s_2)^{\frac{p}\ga}  + \al S_{N,p}^{p^*} s_2^{\frac{p^*}\ga} \geq 1 = \lim_{s\to 0} k_{N,p,\ga,\al}(s).
\end{equation}
Let $k\geq 1$ denote the integer number such that $k < p^*/ \ga \leq k+1$, i.e, $p^*/\ga = k + r$ for $r \in (0,1]$. The $i-$th derivative of $k_{N,p,\ga,\al}$ is given by
\[
k_{N,p,\ga,\al}^{(i)}(s) = \prod_{j=0}^{i-1}\lt(j -\frac p\ga\rt)(1- s)^{\frac p\ga -i} + \al S_{N,p}^{p^*} \prod_{j=0}^{i-1} \lt(\frac{p^*}\ga -j\rt) s^{\frac {p^*}\ga -i}.
\]
For $i=1,2,\ldots,k$ we have
\begin{equation}\label{eq:daohambaci}
\lim_{s\to 0}k_{N,p,\ga,\al}^{(i)}(s) =\prod_{j=0}^{i-1}\lt(j -\frac p\ga\rt) < 0,\qquad \lim_{s\to 1} k_{N,p,\ga,\al}^{(i)}(s) =-\infty.
\end{equation}
Combining \eqref{eq:1111} and \eqref{eq:daohambaci}, we get that
\begin{equation}\label{eq:nega}
\sup_{s\in (0,1)} k_{N,p,\ga,\al}^{(i)}(s) >0\qquad i =1,2,\ldots,k.
\end{equation}
Indeed, if this is not true, i.e., there is $i$ such that $\sup_{s\in (0,1)} k_{N,p,\ga,\al}^{(i)}(s) \leq 0$, then using \eqref{eq:daohambaci} consecutively we see that $k_{N,p,\ga,\al}$ is strictly decreasing on $(0,1)$ which contradicts with \eqref{eq:1111}.

The functions $ k_{N,p,\ga,\al}^{(k+1)}$ is strictly decreasing on $(0,1)$ with
\[
\lim_{s\to 0}k_{N,p,\ga,\al}^{(k+1)}(s) \geq \prod_{j=0}^{k} \lt(\frac{p^*}\ga -j\rt) > 0,\qquad \lim_{s\to 1} k_{N,p,\ga,\al}^{(i)}(s) =-\infty.
\]
Hence there exists uniquely $\tilde s_{k+1} \in (0,1)$ such that $k_{N,p,\ga,\al}^{(k+1)}(\tilde s_{k+1}) =0$, $k_{N,p,\ga,\al}^{(k+1)} >0$ on $(0,\tilde s_{k+1})$ and $k_{N,p,\ga,\al}^{(k+1)} < 0$ on $(\tilde s_{k+1},1)$, i.e., $k_{N,p,\ga,\al}^{(k)}$ attains its maximum value at $\tilde s_{k+1}$.  Hence $k_{N,p,\ga,\al}^{(k)}(\tilde s_{k+1}) > 0$. The function $k_{N,p,\ga,\al}^{(k)}$ is strictly increasing on $(0,\tilde s_{k+1})$ and strictly decreasing on $(\tilde s_{k+1},1)$, hence there are two points $\tilde s_k' \in (0,\tilde s_{k+1})$ and $\tilde s_k'' \in (\tilde s_{k+1},1)$ such that $k_{N,p,\ga,\al}^{(k)}(\tilde s_k') = k_{N,p,\ga,\al}^{(k)}(\tilde s_k'') =0$, $k_{N,p,\ga,\al}^{(k)} < 0$ on $(0,\tilde s_k') \cup (\tilde s_k'',1)$ and $k_{N,p,\ga,\al}^{(k)} >0$ on $(\tilde s_k',\tilde s_k'')$. This together with \eqref{eq:daohambaci} implies that $k_{N,p,\ga,\al}^{(k-1)}$ attains its maximum value only at $\tilde s_k''$. Obviously, the above argument for $k_{N,p,\ga,\al}^{(k)}$ shows that $k_{N,p,\ga,\al}^{(k-1)}(\tilde s_k'') >0$, and $k_{N,p,\ga,\al}^{(k-1)}$ has only two solutions $\tilde s_{k-1}' \in (\tilde s_k',\tilde s_k'')$ and $\tilde s_{k-1}'' \in (\tilde s_k'',1)$ such that $k_{N,p,\ga,\al}^{(k-1)} < 0$ on $(0,\tilde s_{k-1}') \cup (\tilde s_{k-1}'',1)$ and $k_{N,p,\ga,\al}^{(k-1)} >0$ on $(\tilde s_{k-1}',\tilde s_{k-1}'')$. Using consecutively this arguement for $i=k-2, \ldots 1$, we conclude that $k_{N,p,\ga,\al}'$ has only two solutions $\tilde s_{1}' \in (\tilde s_2',\tilde s_2'')$ and $\tilde s_{1}'' \in (\tilde s_2'',1)$ such that $k_{N,p,\ga,\al}' < 0$ on $(0,\tilde s_1') \cup (\tilde s_1'',1)$ and $k_{N,p,\ga,\al}^{(k-1)} >0$ on $(\tilde s_1',\tilde s_1'')$. This together with \eqref{eq:1111} and Proposition \ref{chanduoiDcritical} implies that $k_{N,p,\ga,\al}$ attains uniquely its maximum value at $s_1'' \in (0,1)$.

Suppose $\al = \al_{N,p}(\ga)$. We have $\al_{N,p}(\ga) S_{N,p}^{p^*} < g_{N,p,\ga}(t)$ unless $t =t_2$ which immediately implies $f_{N,p,\ga,\al_{N,p}(\ga)}(t) < 1$ unless $t =t_2$. This together with Proposition \ref{chanduoiDcritical} gives
\[
D_{N,p,\ga,\al_{N,p}(\ga)} = \sup_{t>0} f_{N,p,\ga,\al_{N,p}(\ga)}(t) = 1,
\]
and $\sup_{t>0} f_{N,p,\ga,\al_{N,p}(\ga)}(t)$ is attained only at $t_2 \in (0,1)$.
\end{proof}
Let us remark here that when $\al \geq \al_{N,p}(\gamma)$ then
\[
\sup_{t> 0} f_{N,p,\ga,\al}(t) \geq f_{N,p,\ga,\al}(t_2) \geq f_{N,p,\ga,\al_{N,p}(\ga)}(t_2) = 1 = \lim_{t\to 0} f_{N,p,\ga}(t).
\]
This together with Proposition \ref{chanduoiDcritical} is enough to conclude that $\sup_{t>0} f_{N,p,\ga,\al_{N,p}(\ga)}(t)$ is attained on $(0,\infty)$. Although our proof above is more complicate, however it shows that $\sup_{t>0} f_{N,p,\ga,\al_{N,p}(\ga)}(t)$ is attained at unique point in $(0,\infty)$. This will be used to classify all maximizers for $D_{N,p,\al,\ga}$ below.

\begin{lemma}\label{case4}
Let $N \geq 2$, $p\in (1,N^{1/2})$ and $\al, \ga >0$. If $0 < \ga \leq p$, then $\al_{N,p}(\ga) = S_{N,p}^{-p^*}$ and $D_{N,p,\ga,\al}$ is not attained for any $\al >0$. Moreover, we have $ D_{N,p,\ga,\al} = \max\{1, \al S_{N,p}^{p^*}\}$..
\end{lemma}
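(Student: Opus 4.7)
The plan is to reduce the claim to one-variable analysis via Theorem \ref{formulacritical} and Lemma \ref{relationcritical}: once I compute $\inf_{t>0} g_{N,p,\ga}(t)$ and $\sup_{t>0} f_{N,p,\ga,\al}(t)$ and verify that the supremum is not attained, the conclusions for $\al_{N,p}(\ga)$, the value of $D_{N,p,\ga,\al}$ and its non-attainability will follow at once. To make the analysis transparent I would use the change of variable $s = t/(1+t)\in(0,1)$ already employed in Lemma \ref{case3}, under which
\[
f_{N,p,\ga,\al}(t) = (1-s)^{p/\ga} + \al S_{N,p}^{p^*}\, s^{p^*/\ga}, \qquad g_{N,p,\ga}(t) = \frac{1 - (1-s)^{p/\ga}}{s^{p^*/\ga}}.
\]
The entire argument will hinge on the hypothesis $0<\ga\le p$, which translates into the two exponent inequalities $p/\ga\ge 1$ and $p^*/\ga>1$.

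For the formula $\al_{N,p}(\ga)=S_{N,p}^{-p^*}$, I would observe that $g_{N,p,\ga}(t)>1$ is equivalent to
\[
(1-s)^{p/\ga} + s^{p^*/\ga} < 1,
\]
which I would prove by summing the two elementary estimates $(1-s)^{p/\ga}\le 1-s$ (since $p/\ga\ge 1$ and $1-s\in[0,1]$) and $s^{p^*/\ga}<s$ (strict, since $p^*/\ga>1$ and $s\in(0,1)$), to obtain a sum strictly less than $(1-s)+s=1$. A direct computation gives $g_{N,p,\ga}(t)\to 1$ as $t\to\infty$ (i.e.\ as $s\to 1^-$), so $\inf_{t>0}g_{N,p,\ga}(t)=1$ and the infimum is not attained; by \eqref{eq:alphaformulacritical} this yields $\al_{N,p}(\ga)=S_{N,p}^{-p^*}$.

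For the value and non-attainability of $D_{N,p,\ga,\al}$, I would apply the same two elementary estimates, now with the coefficient $\al S_{N,p}^{p^*}$ attached to the $s^{p^*/\ga}$ term, to get
\[
f_{N,p,\ga,\al}(t) < (1-s) + \al S_{N,p}^{p^*}\, s \le \max\{1,\al S_{N,p}^{p^*}\}
\]
for every $t>0$; the first inequality is strict since $\al S_{N,p}^{p^*} s^{p^*/\ga} < \al S_{N,p}^{p^*} s$ strictly on $s\in(0,1)$ (using $\al>0$), and the second is the usual convex-combination bound. Since $\lim_{t\to 0^+}f_{N,p,\ga,\al}(t)=1$ and $\lim_{t\to\infty}f_{N,p,\ga,\al}(t)=\al S_{N,p}^{p^*}$, it follows that $\sup_{t>0}f_{N,p,\ga,\al}(t)=\max\{1,\al S_{N,p}^{p^*}\}$ is not attained on $(0,\infty)$. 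Theorem \ref{formulacritical} then gives $D_{N,p,\ga,\al}=\max\{1,\al S_{N,p}^{p^*}\}$, and Lemma \ref{relationcritical} forces $D_{N,p,\ga,\al}$ not to be attained in $S_{N,p,\ga}$ for any $\al>0$.

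The main point to watch is the borderline case $\ga=p$, where $p/\ga=1$ and the first elementary estimate degenerates to the equality $(1-s)^{p/\ga}=1-s$. However, the companion estimate $s^{p^*/\ga}<s$ remains strict throughout $(0,1)$ since $p^*>p=\ga$, and that single strict inequality is enough to preserve all the strict sums used above. Apart from this small piece of bookkeeping, the whole proof is elementary one-variable calculus and no further analytic input (e.g.\ concentration-compactness) is needed.
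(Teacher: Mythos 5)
Your proposal is correct and follows the same overall strategy as the paper: reduce everything to the one-variable functions $f_{N,p,\ga,\al}$ and $g_{N,p,\ga}$ via Theorem \ref{formulacritical} and Lemma \ref{relationcritical}, pass to $s=t/(1+t)$, and show $f< \max\{1,\al S_{N,p}^{p^*}\}$ pointwise while the boundary limits realize that value. The only difference is local: where the paper proves $\inf_{s} l_{N,p,\ga}=1$ by computing $l'_{N,p,\ga}$ through the auxiliary function $m_{N,p,\ga}$ (strictly decreasing since $\ga\le p$), and proves $k_{N,p,\ga,\al}(s)<\max\{1,\al S_{N,p}^{p^*}\}$ by the strict convexity of $k_{N,p,\ga,\al}$ on $[0,1]$, you instead use the direct power inequalities $(1-s)^{p/\ga}\le 1-s$ and $s^{p^*/\ga}<s$, which gives both conclusions at once without any differentiation; your handling of the borderline case $\ga=p$, where only the second inequality remains strict, is exactly the point that needs care and you address it correctly.
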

\begin{proof}
Making the change of variable $s = t/(1+t) \in (0,1)$ and using again the transformation of functions as in the proof of Lemma \ref{case3} we have $l'_{N,p,\ga}(s) = \frac{p^*}\ga s^{-\frac {p^*}\ga -1}\, m_{N,p,\ga}(s)$.
Since $\ga \leq p$, it is clear that $m_{N,p,\ga}$ is a strictly decreasing function of $s$ on $(0,1)$ and $\lim\limits_{s\to 0} m_{N,p,\ga}(s) =0$. Hence, $m_{N,p,\ga}(s) < 0$ for any $s \in (0,1)$ or equivalently $l'_{N,p,\ga}(s) < 0$ for any $s\in (0,1)$. Consequently, we have 
\[
\inf_{t> 0} g_{N,p,\ga}(t) = \inf_{s \in (0,1)} l_{N,p,\ga}(s) = \lim_{s\to 1}l_{N,p,\ga}(s) =1.
\] 
Hence $\al_{N,p}(\ga) = S_{N,p}^{-p^*}$.

We extend $k_{N,p,\ga,\al}$ to all $[0,1]$ by 
\[
k_{N,p,\ga,\al}(0) = \lim_{s\to 0} k_{N,p,\ga,\al}(s) =1, \qquad\text{\rm and }\qquad k_{N,p,\ga,\al}(1) =\lim_{s\to 1} k_{N,p,\ga,\al}(s) = \al S_{N,p}^{p^*}.
\]
Then $k_{N,p,\ga,\al}$ is continuous on $[0,1]$. Since $\ga \leq p < p^*$ then $k_{N,p,\ga,\al}$ is strict convex on $[0,1]$. As a consequence, we have for any $s \in (0,1)$ that
\[
k_{N,p,\ga,\al}(s) < s k_{N,p,\ga,\al}(1) + (1-s) k_{N,p,\ga,\al}(0) \leq \max\{1,\al S_{N,p}^{p^*}\}.
\]
Hence, for any $t >0$ we get
\[
f_{N,p,\ga,\al}(t) = k_{N,p,\ga,\al}\lt(\frac{t}{1+t}\rt) < \max\{1,\al S_{N,p}^{p^*}\}.
\]
The preceding inequality together with Proposition \ref{chanduoiDcritical} implies 
\[
D_{N,p,\ga,\al} = \sup_{t>0} f_{N,p,\ga,\al}(t) = \max\{1,\al S_{N,p}^{p^*}\}.
\]
Moreover, we know that $f_{N,p,\ga,\al}(t) < \sup_{t>0} f_{N,p,\ga,\al}(t)$ for any $t >0$. Hence $\sup_{t>0} f_{N,p,\ga,\al}(t)$ does not attains in $(0,\infty)$ which finishes our proof by Lemma \ref{relationcritical}.
\end{proof}
Concerning to $\al_{N,p}(\ga)$, we have the following results on its attainability and its non-attainability in $S_{N,p,\ga}$.
\begin{proposition}
Let $N\geq 2$ and $p\in (1,N)$. If $p\geq N^{1/2}$ then $\al_{N,p}(\ga)$ is not attained for any $\ga >0$. If $p < N^{1/2}$ then $\al_{N,p}(\ga)$ is attained for any $p < \ga < p^*$ while it is not attained for any $\ga \geq p^*$ or $\ga \leq p$. Moreover, the function $\ga \mapsto \al_{N,p}(\ga)$ is continuous on $(0,p^*]$.
\end{proposition}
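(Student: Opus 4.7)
The plan is to mirror the strategy used for Theorem~\ref{Maximizerscritical}: reduce the attainability of $\al_{N,p}(\ga)$ to the attainability of the infimum in formula~\eqref{eq:alphaformulacritical}, and then read off each assertion from the analysis of $g_{N,p,\ga}$ already carried out in Lemmas~\ref{case1}--\ref{case4}. First I would establish an analogue of Lemma~\ref{relationcritical}: for $p < N^{1/2}$, $\al_{N,p}(\ga)$ is attained in $S_{N,p,\ga}$ if and only if $\inf_{t > 0} g_{N,p,\ga}(t)$ is attained in $(0,\infty)$. The ``only if'' direction follows from the chain~\eqref{eq:usingSobolev}: a minimizer must produce equality in the Sobolev inequality, forcing it to be a dilation of $u_*$, which lies in $W^{1,p}(\R^N)$ precisely because $p < N^{1/2}$. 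The ``if'' direction uses $w_\lam$ from~\eqref{eq:newfunction} with $\lam$ chosen so that $\lam^{\ga/N}\|\na u_*\|_p^\ga/\|u_*\|_p^\ga$ coincides with an interior minimizer $t_0$ of $g_{N,p,\ga}$, exactly as in the second half of the proof of Lemma~\ref{relationcritical}. In the case $p \geq N^{1/2}$, the Sobolev inequality on $W^{1,p}(\R^N)$ is strict, so~\eqref{eq:usingSobolev} becomes strict and $I_{N,p,\ga}(u) > \al_{N,p}(\ga)$ for every $u \in S_{N,p,\ga}$, yielding non-attainment for every $\ga > 0$.

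When $p < N^{1/2}$, the four assertions are immediate from this equivalence. For $\ga > p^*$, Lemma~\ref{case1} gives $\inf g_{N,p,\ga} = 0 = \lim_{t \to 0^+} g_{N,p,\ga}(t)$ while $g_{N,p,\ga} > 0$ on $(0,\infty)$, so the infimum is not attained. For $\ga = p^*$, the strict monotonicity of $g_{N,p,p^*}$ from Lemma~\ref{case2} forces $\inf g_{N,p,p^*} = p/p^*$ to be a one-sided limit, hence not attained. For $p < \ga < p^*$, Lemma~\ref{case3} produces a unique interior minimizer, so by the equivalence $\al_{N,p}(\ga)$ is attained. For $\ga \leq p$, the analysis in the proof of Lemma~\ref{case4} shows $l_{N,p,\ga}$ is strictly decreasing on $(0,1)$ with $\inf = \lim_{s \to 1^-} l_{N,p,\ga}(s) = 1$, so $\inf g_{N,p,\ga} = 1$ is a boundary limit and not attained.

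For the continuity of $\ga \mapsto \al_{N,p}(\ga)$ on $(0, p^*]$, I would use the formula $\al_{N,p}(\ga) = S_{N,p}^{-p^*} \inf_{t>0} g_{N,p,\ga}(t)$ together with the joint continuity of $(\ga, t) \mapsto g_{N,p,\ga}(t)$. Upper semicontinuity is immediate: for any fixed $t > 0$, $g_{N,p,\ga'}(t) \to g_{N,p,\ga}(t)$, so $\limsup_{\ga' \to \ga} \inf g_{N,p,\ga'} \leq g_{N,p,\ga}(t)$, and infimizing over $t$ gives $\limsup \inf g_{N,p,\ga'} \leq \inf g_{N,p,\ga}$. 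For lower semicontinuity at $\ga_0 \in (0, p^*]$, take $\ga_n \to \ga_0$ and near-optimal $t_n$ with $g_{N,p,\ga_n}(t_n) \leq \inf g_{N,p,\ga_n} + 1/n \leq 2$ (using $\lim_{t\to\infty} g_{N,p,\ga}(t) = 1$). Passing to a subsequence, $t_n \to t^* \in [0, \infty]$. If $t^* \in (0, \infty)$, joint continuity gives $g_{N,p,\ga_n}(t_n) \to g_{N,p,\ga_0}(t^*) \geq \inf g_{N,p,\ga_0}$. If $t^* = \infty$, the uniform asymptotic $g_{N,p,\ga}(t) \to 1$ yields the limit $1 \geq \inf g_{N,p,\ga_0}$.

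The delicate case $t^* = 0$ is where I expect the main obstacle. The Taylor expansion
\[
g_{N,p,\ga}(t) = \frac{p}{\ga}\, t^{1 - p^*/\ga}\bigl(1 + O(t)\bigr) \qquad \text{as } t \to 0^+
\]
handles it. For $\ga_0 < p^*$, the exponent $1 - p^*/\ga_n$ is eventually negative and bounded away from zero, forcing $g_{N,p,\ga_n}(t_n) \to \infty$, which contradicts the bound $g_{N,p,\ga_n}(t_n) \leq 2$; hence this sub-case cannot occur. For $\ga_0 = p^*$, one has $\ga_n \leq p^*$ and therefore $1 - p^*/\ga_n \leq 0$, so $t_n^{1 - p^*/\ga_n} \geq 1$ once $t_n \leq 1$, yielding $\liminf g_{N,p,\ga_n}(t_n) \geq \lim_n (p/\ga_n) = p/p^* = \inf g_{N,p,p^*}$. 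Together with upper semicontinuity, this completes the continuity on $(0, p^*]$.
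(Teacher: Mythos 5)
Your proposal is correct, and the attainment/non-attainment part is essentially the paper's argument: reduce everything to $g_{N,p,\ga}$ via \eqref{eq:alphaformulacritical} (with the analogue of Lemma \ref{relationcritical} for $I_{N,p,\ga}$, and strictness of \eqref{eq:usingSobolev} when $p\geq N^{1/2}$), then read off each case from the monotonicity/critical-point analysis of $g_{N,p,\ga}$ already done in Lemmas \ref{case1}--\ref{case4}. One small imprecision in your ``only if'' direction: what you actually need from the equality chain $\al_{N,p}(\ga)=I_{N,p,\ga}(u)\geq S_{N,p}^{-p^*}g_{N,p,\ga}(\|\na u\|_p^\ga/\|u\|_p^\ga)\geq \al_{N,p}(\ga)$ is that the \emph{second} inequality is an equality, which says $g$ attains its infimum at $\|\na u\|_p^\ga/\|u\|_p^\ga$; the forced equality in the Sobolev inequality (hence $u$ being a dilate of $u_*$) is the classification statement, not the attainment statement. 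Where you genuinely diverge from the paper is the continuity of $\ga\mapsto\al_{N,p}(\ga)$. The paper first observes that $\ga\mapsto g_{N,p,\ga}(t)$ is decreasing, so $\al_{N,p}$ is non-increasing, then treats the endpoint $p^*$ by monotonicity plus upper semicontinuity, the endpoint $p$ by tracking the explicit minimizer $t_\ga>(p^*-\ga)/(\ga-p)\to\infty$, and interior compact subintervals of $(p,p^*)$ by localizing the infimum to a fixed compact $t$-range and invoking uniform continuity. You instead prove lower semicontinuity directly by a compactness argument on near-minimizers $t_n$, disposing of the degenerate limits $t^*=0$ and $t^*=\infty$ via the asymptotics $g_{N,p,\ga}(t)\sim (p/\ga)t^{1-p^*/\ga}$ and $g_{N,p,\ga}(t)\to 1$; combined with the (shared) upper semicontinuity this gives continuity on all of $(0,p^*]$ in one stroke. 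Your route is more unified and avoids both the monotonicity observation and the explicit localization, at the cost of having to check that the $O(t)$ error and the limit at infinity are uniform for $\ga$ near $\ga_0$ --- which they are, since $\ga_n$ stays in a compact subset of $(0,\infty)$. Both arguments are sound.
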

\begin{proof}
The non-attainability of $\al_{N,p}(\ga)$ for $p\geq N^{1/2}$ is completely proved by the method in the proof of Lemma \ref{nonexistence} by exploiting the non-attainability of the Sobolev inequality \eqref{eq:Sobolevinequality} in $W^{1,p}(\R^N)$. The attainability of $\al_{N,p}(\ga)$ for $p < N^{1/2}$ and $\ga\in (p,p^*)$ was already proved in the proof of Lemma \ref{case3}. The non-attainability of $\al_{N,p}(\ga)$ for $p < N^{1/2}$ and $\gamma \not\in (p,p^*)$ follows from the proof of Lemma \ref{case1}, \ref{case2} and \ref{case4}.

To prove the continuity of $\al_{N,p}$ on $(0,p^*]$, it is enough to show that it is continuous on $[p,p^*]$ since $\al_{N,p}(\ga) = S_{N,p}^{-p^*}$ for any $\ga \leq p$. We claim that
\begin{equation}\label{eq:semicont}
\limsup_{\ga \to \ga_0} \al_{N,p}(\ga) \leq \al_{N,p}(\ga_0).
\end{equation}
This is an easy consequence of the continuity of $(0,\infty)\times (0,\infty) \ni (t,\ga) \mapsto g_{N,p,\ga}(t)$. Rewriting the function $g_{N,p,\ga}(t)$ as
\[
g_{N,p,\ga}(t) = \lt(\frac{1+t}t\rt)^{\frac{p^*}\ga} \lt(1 - \lt(\frac1{1+t}\rt)^{\frac p\ga}\rt)
\]
we see that $\ga \mapsto g_{N,p,\ga}(t)$ is a strictly decreasing function of $\ga$. Hence $\ga \mapsto \al_{N,p}(\ga)$ is a non-increasing function of $\ga >0$ and is strictly decreasing function of $\ga \in (p,p^*)$ since $\al_{N,p}(\ga)$ is attained for $\ga \in (p,p^*)$. Then non-increasing property of $\al_{N,p}(\ga)$ and \eqref{eq:semicont} imply
\[
\lim_{\ga \uparrow p^*} \al_{N,p}(\ga) = \al_{N,p}(p^*).
\]
Let $t_\ga \in (0,\infty)$ be the unique point where $g_{N,p,\ga}$ attains its minimum value in $(0,\infty)$ (the uniqueness come from the proof of Lemma \ref{case3}). Also, from the proof of Lemma \ref{case3}, we get $t_\ga > (p^*-\ga)/(\ga -p)$, hence $t_\ga \to \infty$ as $\ga \to p$. Hence
\[
\lim_{\ga \to p} \al_{N,p}(\ga) = \frac1{S_{N,p}^{p^*}} \lim_{\ga\to p} \lt(\frac{1+t_\ga}{t_\ga}\rt)^{\frac{p^*}\ga} \lt(1 - \lt(\frac1{1+t_\ga}\rt)^{\frac p\ga}\rt) =S_{N,p}^{-p^*} = \al_{N,p}(p).
\]
Hence $\al_{N,p}$ is continuous at $p$ and $p^*$. For any $p < a <  b < p^*$ and $\ga \in [a,b]$ we have 
\[
\lim_{t\to 0} g_{N,p,\ga}(t) \geq \lim_{t\to 0} g_{N,p,b}(t) = \infty > \inf_{t>0} g_{N,p,a}(t),
\]
and
\[
\lim_{t\to \infty} g_{N,p,\ga}(t)\geq \lim_{t\to \infty} g_{N,p,b}(t) =1 > \inf_{t>0} g_{N,p,a}(t),
\]
since $a > p$. Hence there exist $0 < A < B< \infty$ such that 
\[
g_{N,p,\ga}(t) > \inf_{t>0} g_{N,p,a}(t) > \inf_{t>0} g_{N,p,\ga}(t)
\]
for any $t\in (0,A)\cup (B,\infty)$ and $\ga \in [a,b]$. Thus, we get
\[
\al_{N,p}(\ga) = S_{N,p}^{-p^*} \inf_{t\in [A,B]} g_{N,p,\ga}(t),
\]
for any $\ga \in [a,b]$. According to the continuity of function $[a,b]\times [A,B] \ni (\ga,t) \mapsto g_{N,p,\ga}(t)$, the function $\ga \mapsto\al_{N,p}(\ga)$ is continuous on $[a,b]$ for any $p < a < b < p^*$. Hence $\al_{N,p}$ is continuous on $(p,p^*)$. This completes our proof. 
\end{proof}


To conclude this section, we give some comments on the maximizers for the problem \eqref{eq:variationalproblem} in the critical case. From the proof of Theorem \ref{Maximizerscritical} (more precisely, Lemma \ref{relationcritical}), we see that if the maximizers of the problem \eqref{eq:variationalproblem} in the critical case exists then $w_\lam$ defined by \eqref{eq:newfunction} for some suitable $\lam >0$ determined by the value of $D_{N,p,\ga,\al}$ is a maximizer. Since our problem is invariant under the translation and multiple by $\pm 1$ then $\pm w_\lam(\cdot-x_0), x_0\in \R^N$ is also a maximizer. Conversely, if $u$ is a maximizer of $D_{N,p,\ga,\al}$, i.e., 
\[
\sup_{t>0} f_{N,p,\ga,\al}(t) = D_{N,p,\ga,\al} = J_{N,p,\ga,\al}(u) \leq f_{N,p,\ga,\al}\lt(\frac{\|\na u\|_p^\ga}{\|u\|_p^\ga}\rt)\leq \sup_{t>0} f_{N,p,\ga,\al}(t),
\]
here we used \eqref{eq:sosanh}. This shows that $f_{N,p,\ga,\al}$ is attained by $\|\na u\|_p^\ga/\|u\|_p^\ga$ and
\[
J_{N,p,\ga,\al}(u) \leq f_{N,p,\ga,\al}\lt(\frac{\|\na u\|_p^\ga}{\|u\|_p^\ga}\rt).
\]
The latter condition implies that $u$ is a maximizer of the Sobolev inequality \eqref{eq:Sobolevinequality}, i.e., $u$ has the form 
$u(x) = a \lam^{\frac 1p} u_*(\lam^{\frac1N}(x-x_0))$ for some $a\not=0$, $\lam >0$ and $x_0 \in \R^N$ where $u_*$ is given by \eqref{eq:optimalSobolev}. To ensure $u\in S_{N,p,\ga}$, $a$ must be equal to $
a = \pm 1/\lt(\|u_*\|_p^\ga + \lam^{\frac \ga N} \|\na u_*\|_p^\ga\rt)^{\frac1\ga}$. Hence $u(x) =\pm w_{\lam}(x-x_0)$ for some $x_0 \in \R^N$ and $\lam >0$. Let $t_{N,p,\ga,\al} \in (0,\infty)$ be the unique point where $f_{N,p,\ga,\al}$ attains its minimum value, then we must have $\|\na u\|_p^\ga/\|u\|_p^\ga =t_{N,p,\ga,\al}$, which is equivalent to 
\[
\lam = t_{N,p,\ga,\al}^{\frac N\ga} \lt(\frac{\|u_*\|_p}{\|\na u_*\|_p}\rt)^N.
\]
Therefore $\lam$ is determined uniquely by $t_{N,p,\ga,\al}$ (or equivalent $D_{N,p,\ga,\al}$ since it is attained at unique point in $(0,\infty)$). Thus, we have proved the following result.
\begin{proposition}\label{eq:classify}
Let $N \geq 2$, $p\in (1,N^{\frac12})$ and $\al, \ga >0$. If $D_{N,p,\ga,\al}$ is attained in $S_{N,p,\ga}$, then its maximizers must have the form $\pm\lam^{\frac1p} u_*(\lam^{\frac1N}(x-x_0))/(\|u_*\|_p^\ga + \lam^{\frac\ga N} \|\na u_*\|_p^\ga)^{\frac 1\ga}$ for some $x_0 \in \R^N$ where $\lam >0$ is determined uniquely by $D_{N,p,\ga,\al}$.
\end{proposition}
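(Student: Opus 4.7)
My plan is to follow the sketch given in the paragraph preceding the statement and turn it into a clean argument that runs in two stages: first pin down the shape of any maximizer by forcing equality in the Sobolev inequality \eqref{eq:Sobolevinequality}, then pin down the dilation parameter by using the uniqueness of the maximizer of $f_{N,p,\ga,\al}$ on $(0,\infty)$.

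For the first stage, suppose $u \in S_{N,p,\ga}$ satisfies $J_{N,p,\ga,\al}(u) = D_{N,p,\ga,\al}$. Chaining \eqref{eq:sosanh} and Theorem \ref{formulacritical}, I obtain
\[
D_{N,p,\ga,\al} = J_{N,p,\ga,\al}(u) \leq f_{N,p,\ga,\al}\!\left(\tfrac{\|\na u\|_p^\ga}{\|u\|_p^\ga}\right) \leq \sup_{t>0} f_{N,p,\ga,\al}(t) = D_{N,p,\ga,\al},
\]
so both inequalities are equalities. The first equality is the place where Sobolev's inequality was used when deriving \eqref{eq:aaaa}: going back through that computation, equality in \eqref{eq:sosanh} for $u$ forces $\|u\|_{p^*} = S_{N,p}\|\na u\|_p$. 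Hence $u$ is an extremal of the sharp Sobolev inequality, and by the classification recalled after \eqref{eq:optimalSobolev} it must take the form $u(x) = a\, u_*(b(x - x_0))$ for some $a \in \R \setminus \{0\}$, $b > 0$ and $x_0 \in \R^N$. Setting $\lam = b^N$ and using $\|u_*(\lam^{1/N}\cdot)\|_p = \lam^{-1/p}\|u_*\|_p$ and $\|\na u_*(\lam^{1/N}\cdot)\|_p = \lam^{1/N - 1/p}\|\na u_*\|_p$, the constraint $\|u\|_p^\ga + \|\na u\|_p^\ga = 1$ forces $a = \pm \lam^{1/p}/(\|u_*\|_p^\ga + \lam^{\ga/N}\|\na u_*\|_p^\ga)^{1/\ga}$, which is exactly the form claimed.

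For the second stage, the equality $f_{N,p,\ga,\al}(\|\na u\|_p^\ga/\|u\|_p^\ga) = \sup_{t>0} f_{N,p,\ga,\al}(t)$ says that the ratio $t_u := \|\na u\|_p^\ga/\|u\|_p^\ga$ is a maximizer of $f_{N,p,\ga,\al}$ on $(0,\infty)$. The proofs of Lemma \ref{case1}, Lemma \ref{case2} and Lemma \ref{case3} show, in each of the regimes where $D_{N,p,\ga,\al}$ is attained, that $\sup_{t>0} f_{N,p,\ga,\al}(t)$ is realized at a single point $t_{N,p,\ga,\al} \in (0,\infty)$ (monotonicity of $h_{N,p,\ga,\al}$ in case $\ga > p^*$, the sign analysis of the iterated derivatives $k_{N,p,\ga,\al}^{(i)}$ in case $p < \ga \leq p^*$). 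Thus $t_u = t_{N,p,\ga,\al}$, which together with the explicit expressions $\|u\|_p^p = \|u_*\|_p^p/(\|u_*\|_p^\ga + \lam^{\ga/N}\|\na u_*\|_p^\ga)^{p/\ga}$ and the analogous one for $\|\na u\|_p^p$ gives
\[
\lam^{\ga/N}\,\frac{\|\na u_*\|_p^\ga}{\|u_*\|_p^\ga} = t_{N,p,\ga,\al},
\]
which determines $\lam$ uniquely in terms of $D_{N,p,\ga,\al}$.

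The main obstacle is the uniqueness of the argmax of $f_{N,p,\ga,\al}$; fortunately this was precisely what was extracted from the (somewhat delicate) inductive analysis of the derivatives $k_{N,p,\ga,\al}^{(i)}$ in Lemma \ref{case3}, and the remark immediately following that lemma is explicit about why the more elaborate argument was needed here rather than just the simpler existence argument. Once uniqueness of $t_{N,p,\ga,\al}$ is in hand, everything else is a direct computation and the choice $x_0 \in \R^N$ (the translation freedom) and the sign are the only remaining degrees of freedom, yielding exactly the claimed family.
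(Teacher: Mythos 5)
Your argument is correct and is essentially the paper's own proof: force equality in the chain $D=J(u)\le f_{N,p,\ga,\al}(\|\na u\|_p^\ga/\|u\|_p^\ga)\le \sup_{t>0}f_{N,p,\ga,\al}(t)=D$, deduce from the first equality that $u$ is a Sobolev extremal (hence of the form $a\,u_*(b(\cdot-x_0))$ with $a$ fixed by the constraint), and from the second that $\|\na u\|_p^\ga/\|u\|_p^\ga$ equals the unique maximizer $t_{N,p,\ga,\al}$ of $f_{N,p,\ga,\al}$, which pins down $\lam$. The appeal to the uniqueness of the argmax established in Lemmas \ref{case1}--\ref{case3} (and flagged in the remark after Lemma \ref{case3}) is exactly the ingredient the paper uses as well.
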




\section{A new proof of Theorem \ref{IshiwataWadade}}
This section is devoted to give a new proof of Theorem \ref{IshiwataWadade}. Comparing with the proof of Ishiwata and Wadade in \cite{Ishiwata}, our proof is more simple and elementary. Also, our proof does not use any rearrangement argument as used in \cite{Ishiwata}. We first prove Theorem \ref{explicitformula} which is the key in our proof of Theorem \ref{IshiwataWadade}.

\begin{proof}[Proof of Theorem \ref{explicitformula}]
Our proof follows the way in the proof of Theorem \ref{formulacritical}. We start by proving \eqref{eq:Dformula}. For any $u\in S_{N,p,\ga}$, using Gagliardo--Nirenberg--Sobolev inequality \eqref{eq:GNSinequality}, we get
\begin{align*}
J_{N,p,\ga,\al,q}(u) &\leq \|u\|_p^p + \al B_{N,p,q} \|\na u\|_p^{\ga_c} \|u\|_p^{q -\ga_c} \\
&=\frac{\|u\|_p^p \lt(\|u\|_p^\ga + \|\na u\|_p^\ga\rt)^{\frac{q-p}\ga} + \al B_{N,p,q} \|\na u\|_p^{\ga_c} \|u\|_p^{q -\ga_c}}{\lt(\|u\|_p^\ga + \|\na u\|_p^\ga\rt)^{\frac q\ga}}\\
&=\frac{\lt(1 + \frac{\|\na u\|_p^\ga}{\|u\|_p^\ga}\rt)^{\frac{q-p}\ga} + \al B_{N,p,q} \lt(\frac{\|\na u\|_p^\ga}{\|u\|_p^\ga}\rt)^{\frac{\ga_c}\ga}}{\lt(1 + \frac{\|\na u\|_p^\ga}{\|u\|_p^\ga}\rt)^{\frac{q}\ga}}\\
&\leq \sup_{t >0}\, \frac{(1+t)^{\frac{q-p}\ga} + \al B_{N,p,q} t^{\frac{\ga_c}\ga}}{(1+t)^{\frac q\ga}}.
\end{align*}
Taking the supremum over all $u\in S_{N,p,\ga}$, we obtain
\begin{equation}\label{eq:nhohon}
D_{N,p,\ga,\al,q} \leq \sup_{t >0}\, \frac{(1+t)^{\frac{q-p}\ga} + \al B_{N,p,q} t^{\frac{\ga_c}\ga}}{(1+t)^{\frac q\ga}}.
\end{equation}

For a reversed inequality of \eqref{eq:nhohon}, let us take $v_*\in W^{1,p}(\R^N)\setminus\{ 0\}$ to be a maximizer for the Gagliardo--Nirenberg--Sobolev inequality \eqref{eq:GNSinequality}. For any $\lam >0$, we define
\begin{equation}\label{eq:testsequence}
v_\lam(x) = \frac{\lam^{\frac1p} v_*(\lam^{\frac1N} x)}{\lt(\|v_*\|_p^\ga + \lam^{\frac \ga N} \|\na v_*\|_p^\ga\rt)^{\frac1\ga}}.
\end{equation}
We have $v_\lam \in S_{N,p,\ga}$ and
\begin{align}\label{eq:applytowlambda}
D_{N,p,\ga,\al,q} \geq J_{N,p,\ga,\al,q}(v_\lam)&= \frac{\|v_*\|_p^p}{\lt(\|v_*\|_p^\ga + \lam^{\frac \ga N} \|\na v_*\|_p^\ga\rt)^{\frac p\ga}} + \al \frac{\lam^{\frac{q-p}p} \|v_*\|_{q}^q}{\lt(\|v_*\|_p^\ga + \lam^{\frac \ga N} \|\na v_*\|_p^\ga\rt)^{\frac q\ga}}\notag\\
&=\frac{\|v_*\|_p^p}{\lt(\|v_*\|_p^\ga + \lam^{\frac \ga N} \|\na v_*\|_p^\ga\rt)^{\frac p\ga}} + \al B_{N,p,q} \frac{\lam^{\frac{q-p}p} \|\na v_*\|_p^{\ga_c}\|v_*\|_p^{q-\ga_c}}{\lt(\|v_*\|_p^\ga + \lam^{\frac \ga N} \|\na v_*\|_p^\ga\rt)^{\frac q\ga}}\notag\\
&=\frac{\lt(1 + \lam^{\frac\ga N} \frac{\|\na v_*\|_p^\ga}{\|v_*\|_p^\ga}\rt)^{\frac{q-p}\ga} + \al B_{N,p,q} \lt(\lam^{\frac\ga N} \frac{\|\na v_*\|_p^\ga}{\|v_*\|_p^\ga}\rt)^{\frac{\ga_c}\ga}}{\lt(1 + \lam^{\frac\ga N} \frac{\|\na v_*\|_p^\ga}{\|v_*\|_p^\ga}\rt)^{\frac{q}\ga}}.
\end{align}
Since \eqref{eq:applytowlambda} holds for any $\lam >0$, then we get
\begin{equation}\label{eq:lonhon}
D_{N,p,\ga,\al,q} \geq \sup_{t >0}\, \frac{(1+t)^{\frac{q-p}\ga} + \al B_{N,p,q} t^{\frac{\ga_c}\ga}}{(1+t)^{\frac q\ga}}.
\end{equation}
Combining \eqref{eq:nhohon} and \eqref{eq:lonhon} together, we obtain \eqref{eq:Dformula}.

We next prove the inequality \eqref{eq:alphaformula}. For any $u \in S_{N,p,\ga}$, by using the sharp Gagliardo--Nirenberg--Sobolev inequality \eqref{eq:GNSinequality}, we get
\begin{align*}
I_{N,p,\ga,q}(u) &\geq \frac1{B_{N,p,q}}\, \frac{1 -\|u\|_p^p}{\|\na u\|_p^{\ga_c} \|u\|_p^{q-\ga_c}} \\
&= \frac1{B_{N,p,q}}\, \frac{\lt(\|u\|_p^\ga + \|\na u\|_p^\ga\rt)^{\frac{q-p}\ga} \lt(\lt(\|u\|_p^\ga + \|\na u\|_p^\ga\rt)^{\frac p\ga} -\|u\|_p^p\rt)}{\|\na u\|_p^{\ga_c} \|u\|_p^{q-\ga_c}}\\
&=\frac1{B_{N,p,q}}\, \frac{\lt(1 + \frac{\|\na u\|_p^\ga}{\|u\|_p^\ga}\rt)^{\frac{q-p}\ga} \lt(\lt(1 + \frac{\|\na u\|_p^\ga}{\|u\|_p^\ga}\rt)^{\frac p\ga} -1\rt)}{\lt(\frac{\|\na u\|_p^\ga}{\|u\|_p^\ga}\rt)^{\frac {\ga_c}\ga}}\\
&\geq \frac1{B_{N,p,q}}\,\inf_{t >0}\, \frac{(1+t)^{\frac{q-p}\ga} \lt((1+t)^{\frac p\ga} -1\rt)}{t^{\frac{\ga_c}\ga}}.
\end{align*}
Taking the infimum over all $u \in S_{N,p,\ga}$ we obtain
\begin{equation}\label{eq:lonhon1}
\al_{N,p,q}(\ga) \geq \frac1{B_{N,p,q}}\,\inf_{t >0}\, \frac{(1+t)^{\frac{q-p}\ga} \lt((1+t)^{\frac p\ga} -1\rt)}{t^{\frac{\ga_c}\ga}}.
\end{equation}

To finish the prove of \eqref{eq:alphaformula}, let us prove a reversed inequality of \eqref{eq:lonhon1}. We use again the function $v_\lam$ defined by \eqref{eq:testsequence} as test function for $\al_{N,p,q}(\ga)$. Obviously, we have
\begin{align}\label{eq:applytowlambda1}
\al_{N,p,q}(\ga) &\leq I_{N,p,\ga,q}(v_\lam)\notag \\
&= \lt(1 -\frac{\|v_*\|_p^p}{\lt(\|v_*\|_p^\ga + \lam^{\frac \ga N}\|\na v_*\|_p^\ga\rt)^{\frac p\ga}}\rt)\lt(\frac{\lam^{\frac{q-p}\ga} \|v_*\|_q^q}{\lt(\|v_*\|_p^\ga + \lam^{\frac \ga N}\|\na v_*\|_p^\ga\rt)^{\frac q\ga}}\rt)^{-1}\notag\\
&=\frac1{B_{N,p,q}} \,\frac{\lt(\|v_*\|_p^\ga + \lam^{\frac \ga N}\|\na v_*\|_p^\ga\rt)^{\frac{q-p}\ga}\lt(\lt(\|v_*\|_p^\ga + \lam^{\frac \ga N}\|\na v_*\|_p^\ga\rt)^{\frac p\ga} -\|v_*\|_p^p\rt)}{\lam^{\frac{q-p}\ga} \|\na v_*\|_p^{\ga_c} \|u\|_p^{q-\ga_c}}\notag\\
&=\frac1{B_{N,p,q}}\,\frac{\lt(1 + \frac{\lam^{\frac \ga N} \|\na v_*\|_p^\ga}{\|v_*\|_p^\ga}\rt)^{\frac{q-p}\ga} \lt(\lt(1 + \frac{\lam^{\frac \ga N} \|\na v_*\|_p^\ga}{\|v_*\|_p^\ga}\rt)^{\frac{p}\ga} -1\rt)}{\lt(\frac{\lam^{\frac \ga N} \|\na v_*\|_p^\ga}{\|v_*\|_p^\ga}\rt)^{\frac{\ga_c}\ga}},
\end{align}
here we use the equality in \eqref{eq:GNSinequality} for $v_*$. The inequality \eqref{eq:applytowlambda1} holds for any $\lambda >0$, then we have 
\begin{equation}\label{eq:nhohon1}
\al_{N,p,q}(\ga) \leq \frac1{B_{N,p,q}}\,\inf_{t >0}\, \frac{(1+t)^{\frac{q-p}\ga} \lt((1+t)^{\frac p\ga} -1\rt)}{t^{\frac{\ga_c}\ga}}.
\end{equation}
Combining \eqref{eq:nhohon1} and \eqref{eq:lonhon1} together, we obtain \eqref{eq:alphaformula}.
\end{proof}

The above proof of Theorem \ref{explicitformula} shows that
\begin{equation}\label{eq:relation}
J_{N,p,\ga,\al,q}(u) \leq f_{N,p,\ga,\al,q}\lt(\frac{\|\na u\|_p^\ga}{\|u\|_p^\ga}\rt),\qquad I_{N,p,\ga,q}(u)\geq \frac{1}{B_{N,p,q}}\, g_{N,p,\ga,q}\lt(\frac{\|\na u\|_p^\ga}{\|u\|_p^\ga}\rt),
\end{equation}
and 
\begin{equation}\label{eq:dangthucsub}
J_{N,p,\ga,\al,q}(v_\lam) = f_{N,p,\ga,\al,q}\lt(\lam^{\frac\ga N}\frac{\|\na u\|_p^\ga}{\|u\|_p^\ga}\rt).
\end{equation}
Recall the definition of functions $f_{N,p,\ga,\al,q}$ and $g_{N,p,\ga,q}$ from \eqref{eq:Dformula} and \eqref{eq:alphaformula}. It is clear that $\lim\limits_{t\to 0} f_{N,p,\ga,\al,p}(t) = 1$. As a consequence, we get
\begin{equation}\label{eq:chanduoiD}
D_{N,p,\ga,\al,p} \geq 1.
\end{equation} 

Similar with Lemma \ref{relationcritical}, the same relation between the attainability of $D_{N,p,\ga,\al,q}$ and the attainability of $\sup_{t>0} f_{N,p,\ga,\al,q}(t)$ in $(0,\infty)$ holds. More precisely, we have
\begin{lemma}\label{subrelation}
Let $N \geq 2$, $p \in (1,N]$, $q \in (p,p^*)$ and $\al, \ga >0$. Then $D_{N,p,\ga,\al,q}$ is attained in $S_{N,p,\ga}$ if and only if $\sup_{t>0} f_{N,p,\ga,\al,q}(t)$ is attains in $(0,\infty)$.
\end{lemma}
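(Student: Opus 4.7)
My plan is to proceed in exact parallel with the proof of Lemma \ref{relationcritical}, exploiting the fact that the proof of Theorem \ref{explicitformula} already yields the pointwise bound \eqref{eq:relation} and the equality \eqref{eq:dangthucsub}. These two ingredients, together with the existence of a Gagliardo--Nirenberg--Sobolev maximizer $v_* \in W^{1,p}(\R^N)$, are all that will be needed; no compactness, rearrangement, or concentration--compactness argument enters.

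For the forward direction I will assume that $D_{N,p,\ga,\al,q}$ is attained by some $u \in S_{N,p,\ga}$ and set $t_u := \|\na u\|_p^\ga / \|u\|_p^\ga$. Combining \eqref{eq:Dformula} with \eqref{eq:relation} forces the chain
\begin{equation*}
\sup_{t>0} f_{N,p,\ga,\al,q}(t) = D_{N,p,\ga,\al,q} = J_{N,p,\ga,\al,q}(u) \leq f_{N,p,\ga,\al,q}(t_u) \leq \sup_{t>0} f_{N,p,\ga,\al,q}(t)
\end{equation*}
to collapse into equalities, which is exactly the statement that $f_{N,p,\ga,\al,q}$ attains its supremum at $t_u$. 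Before invoking this I will verify that $t_u \in (0,\infty)$, i.e.\ that neither $\|u\|_p$ nor $\|\na u\|_p$ vanishes: the former is clear from $\|u\|_{W^{1,p}_\ga} = 1$, and the latter follows because a function in $L^p(\R^N)$ with vanishing weak gradient must be constant, hence zero.

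For the reverse direction I will suppose the supremum on the right of \eqref{eq:Dformula} is attained at some $t_0 \in (0,\infty)$. Since $v_* \in W^{1,p}(\R^N) \setminus \{0\}$ has both $\|v_*\|_p$ and $\|\na v_*\|_p$ strictly positive, I can choose $\lam > 0$ uniquely determined by $\lam^{\ga/N}\|\na v_*\|_p^\ga / \|v_*\|_p^\ga = t_0$, namely
\begin{equation*}
\lam = t_0^{\frac N\ga}\lt(\frac{\|v_*\|_p}{\|\na v_*\|_p}\rt)^{N}.
\end{equation*}
The rescaled test function $v_\lam$ from \eqref{eq:testsequence} then lies in $S_{N,p,\ga}$, and the exact identity \eqref{eq:dangthucsub} gives $J_{N,p,\ga,\al,q}(v_\lam) = f_{N,p,\ga,\al,q}(t_0) = D_{N,p,\ga,\al,q}$, so $v_\lam$ realizes the supremum.

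There is no serious obstacle; the argument is essentially bookkeeping, since the heavy lifting (the sharp expression \eqref{eq:Dformula} and the existence of the Gagliardo--Nirenberg--Sobolev extremal $v_*$) has already been carried out. The one mildly delicate step is confirming that $t_u$ belongs to the open interval $(0,\infty)$. This is in fact simpler than the critical analogue of Lemma \ref{relationcritical}, because in the subcritical regime $q \in (p,p^*)$ the extremal $v_*$ always belongs to $W^{1,p}(\R^N)$, so no additional restriction such as $p < N^{1/2}$ is required, and the construction of the maximizer via $v_\lam$ goes through for every admissible exponent.
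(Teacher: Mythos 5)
Your proof is correct and follows exactly the route the paper intends: the paper itself omits the argument, stating only that it is "completely similar" to Lemma \ref{relationcritical} via \eqref{eq:relation} and \eqref{eq:dangthucsub}, and your two directions (the collapsing chain of inequalities for the forward implication, the rescaled extremal $v_\lam$ for the converse) reproduce that argument faithfully. Your extra remarks — checking $t_u\in(0,\infty)$ and noting that no restriction like $p<N^{1/2}$ is needed since $v_*\in W^{1,p}(\R^N)$ in the subcritical regime — are accurate and fill in details the paper leaves implicit.
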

\begin{proof}
The proof is completely similar with the one of Lemma \ref{relationcritical} by using \eqref{eq:relation} and \eqref{eq:dangthucsub}. Hence we omit it.
\end{proof}

We next prove Theorem \ref{IshiwataWadade}
\begin{proof}[Proof of Theorem \ref{IshiwataWadade}]
We first prove part $(1)$ under the assumption $\ga > \ga_c$. In this case, we have $\lim\limits_{t\to 0} g_{N,p,\ga,q}(t) =0$ which then implies $\al_{N,p,q}(\ga) =0$. We rewrite $f_{N,p,\ga,\al,q}$ as
\[
f_{N,p,\ga,\al,q}(t) = (1+ t)^{-\frac p\ga} + \al B_{N,p,q} t^{\frac {\ga_c}\ga} (1+t)^{-\frac q\ga},
\]
hence
\begin{align*}
f_{N,p,\ga,\al,q}'(t)& = \frac{t^{-\frac{q}\ga -1}}{\ga} \lt(-p(1+t)^{\frac{q-p}\ga} + \al B_{N,p,q}(\ga_c -q)t^{\frac{\ga_c}\ga} + \al B_{N,p,q} \ga_c t^{\frac{\ga_c}\ga -1}\rt)\\
&= \frac{t^{-\frac{q}\ga -1}}{\ga} h_{N,p,\ga,\al,q}(t)
\end{align*}
Since $\ga > \ga_c$ and $\ga_c < q$, then $h_{N,p,\al,\ga,q}$ is strictly decreasing function on $(0,\infty)$. Note that $h_{N,p,\ga,\al,q}(t) \to \infty$ as $t\to 0$ and $h_{N,p,\ga,\al,q}(t) \to -\infty$ as $t\to \infty$. Consequently, there is unique $t_0 \in (0,\infty)$ such that $f_{N,p,\ga,\al,q}'(t_0) =0$, $f_{N,p,\ga,\al,q}'(t) > 0$ if $t \in (0,t_0)$ and $f_{N,p,\ga,\al,q}'(t) < 0$ if $t \in (t_0,\infty)$. Thus $f_{N,p,\ga,\al,q}$ attains its maximum value at unique point $t_0 \in (0,\infty)$. By Lemma \ref{subrelation}, $D_{N,p,\ga,\al,q}$ is attained in $S_{N,p,\ga}$.

We next prove part $(2)$ under the assumption that $\ga = \ga_c$. We first show that $\al_{N,p,q}(\ga_c) =p/(\ga_c B_{N,p,q})$. Indeed, we have $(q-p)/\ga_c = p/N < 1$ and $q/\ga_c >1$, hence the function $h(t) = (1+t)^{q/\ga_c} - (1+t)^{(q-p)/\ga_c}$ is convex on $[0,\infty)$. Consequently, we get $h(t) -h(0) \geq h'(0) t$ which is equivalent to $g_{N,p,\ga,q}(t) \geq p/(\ga_c B_{N,p,q})$ for any $t >0$. It is clear that $\lim\limits_{t\to 0} g_{N,p,\ga,\al} = p/(\ga_c B_{N,p,q})$. These estimates imply $\al_{N,p,q}(\ga_c) =p/(\ga_c B_{N,p,q})$.

As in the proof of part $(1)$, we have
\begin{align*}
f_{N,p,\ga_c,\al,q}'(t) &=\frac{t^{-\frac{q}{\ga_c} -1}}{\ga_c} \lt(-p(1+t)^{\frac{q-p}{\ga_c}} - \al B_{N,p,q}(q-\ga_c)t + \al B_{N,p,q} \ga_c \rt)\\
&=:\frac{t^{-\frac{q}{\ga_c} -1}}{\ga_c} h_{N,p,\ga_c,\al,q}(t).
\end{align*}
Suppose that $\al \leq\alpha_{N,p,q}(\ga_c)$. We then have $f_{N,p,\ga_c,\al,q}'(t) < 0$ for any $t >0$ hence 
\[
f_{N,p,\ga,\al,q}(t) < \lim\limits_{t\to 0} f_{N,p,\ga_c,\al,q}(t) = 1 =\sup_{t> 0} f_{N,p,\ga,\al,q}(t),
\]
for any $t >0$. Hence $\sup_{t> 0} f_{N,p,\ga,\al,q}(t)$ is not attained in $(0,\infty)$. By Lemma \ref{subrelation}, $D_{N,p,\ga_c,\al,q}$ is not attained in $S_{N,p,\ga}$. By \eqref{eq:Dformula}, it is clear that $D_{N,p,\ga,\al,q} =1$.

Suppose that $\al > \al_{N,p,q}(\ga_c)$. We have $\lim_{t\to 0} h_{N,p,\ga_c,\al,q}(t) \to -p + \al B_{N,p,q} \ga_c > 0$ and $\lim_{t\to \infty} h_{N,p,\ga_c,\al,q}(t) = -\infty$ since $\ga_c < q$. Note that $h_{N,p,\ga_c,\al,q}$ is strictly decreasing function on $(0,\infty)$. Hence, there is unique $t_0 \in (0,\infty)$ such that $f_{N,p,\ga_c,\al,q}'(t_0) =0$, $f_{N,p,\ga_c,\al,q}'(t) > 0$ if $t \in (0,t_0)$ and $f_{N,p,\ga_c,\al,q}'(t) < 0$ if $t \in (t_0,\infty)$. Consequently, $\sup_{t>0} f_{N,p,\ga_c,\al,q}(t)$ is attained at the unique point $t_0\in (0,\infty)$. By Lemma \ref{subrelation}, $D_{N,p,\ga_c,\al,q}$ is attained in $S_{N,p,\ga}$.

It remains to prove part $(3)$ under the assumption $\ga < \ga_c$. Suppose that $\al < \al_{N,p,q}(\ga)$. We then have $ \al B_{N,p,q}  < g_{N,p,\ga,\al,q}(t)$ for any $t>0$ which yields $f_{N,p,\ga,\al,q}(t) < 1$ for any $t >0$. This together with \eqref{eq:chanduoiD} implies that 
\[
f_{N,p,\ga,\al,q}(t) < 1 = \sup_{t>0} f_{N,p,\ga,\al,q}(t) =D_{N,p,\ga,\al,q}, 
\]
for any $t>0$. Hence $\sup_{t>0} f_{N,p,\ga,\al,q}(t)$ is not attained in $(0,\infty)$. Hence $D_{N,p,\ga,\al,q}$ is not attained in $S_{N,p,\ga}$ by Lemma \ref{subrelation}.

Suppose that $\al \geq \al_{N,p,q}(\ga)$. Since $\ga < \ga_c < q$, it is clear that 
\[
\lim_{t\to 0} g_{N,p,\ga,q}(t) = \infty = \lim_{t \to \infty} g_{N,p,\ga,q}(t).
\]
Consequently, there exists $t_1 \in (0,\infty)$ such that $ \al_{N,p,q}(\ga) = g_{N,p,\ga,q}(t_1)$ and hence 
\[
\al B_{N,p,q} t_1^{\frac{\ga_c}\ga } \geq \al_{N,p,q}(\ga_c) B_{N,p,q} t_1^{\frac{\ga_c}\ga } =(1+t_1)^{\frac q\ga} - (1+ t_1)^{\frac{q-p}\ga}.
\]
Thus we get
\[
f_{N,p,\ga,\al,q}(t_1) \geq 1 = \lim_{t\to 0} f_{N,p,\ga,\al,q}(t) > 0 = \lim_{t\to \infty} f_{N,p,\ga,\al,q}(t).
\]
Consequently, $\sup_{t>0} f_{N,p,\ga,\al,q}(t)$ is attained in $(0,\infty)$. By Lemma \ref{subrelation}, $D_{N,p,\ga,\al,q}$ is attained in $S_{N,p,\ga}$.
\end{proof}


\section{Further remarks}
In this section, we explain how our method used to prove Theorem \ref{IshiwataWadade} and Theorem \ref{Maximizerscritical} can be applied to the maximizing problems of type \eqref{eq:variationalproblem} for the fractional Laplacian operators. Let $s \in (0,N/2)$, the fractional Laplacian operators $(-\Delta)^s$ is defined via the Fourier transformation by $\widehat{(-\Delta)^s u}(\xi) = |\xi|^{2s} \widehat{u}(\xi)$, where $\widehat{u}(\xi) = \int_{\R^N} e^{-i x\cdot \xi} u(x) dx$ denotes the Fourier transformation of $u$. Let $H^s(\R^N)$ denote the fractional Sobolev space of all functions $u\in L^2(\R^N)$ such that $(-\De)^{s/2} u \in L^2(\R^N)$. For $\gamma >0$, we define
\[
\|u\|_{H^s_\ga} = \lt(\|u\|_2^\ga + \|(-\De)^{\frac s2}u\|_2^\ga\rt)^{\frac1\ga}, \qquad u\in H^s(\R^N).
\]
For $\beta, \ga >0$ and $2 < q \leq 2_s^* := 2N/(N-2s)$, we consider the problem
\begin{equation}\label{eq:fracLaplace}
S_{N,s,\gamma,\beta, q} = \sup_{u\in H^s(\R^N),\, \|u\|_{H^s_\ga} =1} \lt(\|u\|_2^2 + \beta \|u\|_q^q\rt).
\end{equation}
Let $\mathcal S_{N,s,q}$ denote the best constant in the fractional Gagliardo--Nirenberg--Sobolev inequality
\begin{equation}\label{eq:fractionalGNS}
\mathcal S_{N,s,q} = \sup_{u\in H^s(\R^N), \, u\not\equiv 0} \frac{\|u\|_q^q}{\|(-\Delta)^{\frac s2}u\|_2^{\ga_{s,q}} \|u\|_2^{q -\ga_{s,q}}},\qquad \ga_{s,q}: = \frac{N(q-2)}{2s}.
\end{equation}
It is well known that if $q \in (2,2_s^*)$ then $\mathcal S_{N,s,q}$ is attained in $H^s(\R^N)$. If $q = 2_s^*$, \eqref{eq:fractionalGNS} reduces to the fractional Sobolev inequality whose best constant $\mathcal S_{N,s,2_s^*}$ was explicitly computed by Lieb \cite{Lieb}. In fact, Lieb computed the sharp constants in the sharp Hardy--Littlewood--Sobolev inequality which is dual form of the sharp fractional Sobolev inequality. Moreover, the equality holds in the fractional Sobolev inequality if and only if $u(x) =u_{s,*}(x):= (1 + |x|^2)^{(2s-N)/2}$ up to a translation, dilation and multiple by a non-zero constant. Note that $u_{s,*} \not\in H^s(\R^N)$ unless $s < N/4$.

Define
\begin{equation}\label{eq:betafunction}
\beta_{N,s,q}(\ga) = \inf_{u\in H^s(\R^N),\, \|u\|_{H^s_\ga} =1} \frac{1 -\|u\|_2^2}{\|u\|_q^q}.
\end{equation}
Similar to the problem \eqref{eq:variationalproblem}, $\beta_{N,s,q}(\ga)$ will play the role of threshold for the existence of maximizers for the problem \eqref{eq:fracLaplace} (as the one of $\al_{N,p,q}(\ga)$). Following the proof of Theorem \ref{IshiwataWadade} and Theorem \ref{Maximizerscritical}, we can prove the following results for problem \eqref{eq:fracLaplace}.
\begin{theorem}\label{sub}[subcritical case $q < 2_s^*$]
Let $N \geq 1$, $s\in (0, N/2)$, $q\in (2, 2_s^*)$ and $\beta , \ga >0$. Then we have
\begin{description}
\item (1) If $\ga > \ga_{s,q}$ then $\beta_{N,s,q}(\ga) =0$ and $S_{N,s,\gamma,\beta,q}$ is attained for any $\beta >0$.
\item (2) If $\ga =\ga_{s,q}$ then $\beta_{N,s,q}(\ga_{s,q}) = 2/(\ga_{s,q}\, \mathcal S_{N,s,q})$ and $S_{N,s,\ga,\beta,q}$ is attained for any $\beta > \beta_{N,s,q}(\ga_{s,q})$ while it is not attained for any $\beta \leq \beta_{N,s,q}(\ga_{s,q})$.
\item (3) If $\ga < \ga_{s,q}$ then $S_{N,s,\ga,\beta,q}$ is attained for any $\beta \geq \beta_{N,s,q}(\ga)$ while it is not attained for any $\beta < \beta_{N,s,q}(\ga)$.
\end{description}
\end{theorem}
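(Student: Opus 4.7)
The plan is to follow the same three-step scheme that was used for Theorem~\ref{IshiwataWadade}, with $p$ replaced by $2$, the seminorm $\|\na u\|_p$ replaced by $\|(-\De)^{s/2}u\|_2$, and the Gagliardo--Nirenberg--Sobolev inequality \eqref{eq:GNSinequality} replaced by its fractional counterpart \eqref{eq:fractionalGNS}. The first step is to establish a fractional analogue of Theorem~\ref{explicitformula}, namely
\[
S_{N,s,\ga,\beta,q} = \sup_{t>0}\, \frac{(1+t)^{\frac{q-2}{\ga}} + \beta\, \mathcal S_{N,s,q}\, t^{\frac{\ga_{s,q}}{\ga}}}{(1+t)^{\frac{q}{\ga}}} =: \sup_{t>0}\tilde f_{N,s,\ga,\beta,q}(t),
\]
\[
\beta_{N,s,q}(\ga) = \frac1{\mathcal S_{N,s,q}}\,\inf_{t>0}\, \frac{(1+t)^{\frac{q-2}{\ga}}\bigl((1+t)^{\frac{2}{\ga}}-1\bigr)}{t^{\frac{\ga_{s,q}}{\ga}}} =: \inf_{t>0}\tilde g_{N,s,q,\ga}(t).
\]
For the upper bound on $S_{N,s,\ga,\beta,q}$ (respectively the lower bound on $\beta_{N,s,q}(\ga)$), I would apply \eqref{eq:fractionalGNS} to any admissible $u$ and rewrite the resulting ratio in terms of $t=\|(-\De)^{s/2}u\|_2^\ga/\|u\|_2^\ga$, exactly as in the proof of Theorem~\ref{explicitformula}. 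For the reverse inequalities, I would use the fact that \eqref{eq:fractionalGNS} has a maximizer $v_{s,*}\in H^s(\R^N)$ (which exists in the subcritical range $q\in(2,2_s^*)$), and test against the rescaled family $v_\lam(x)=\lam^{1/2}v_{s,*}(\lam^{1/N}x)/(\|v_{s,*}\|_2^\ga+\lam^{\ga/N}\|(-\De)^{s/2}v_{s,*}\|_2^\ga)^{1/\ga}$, letting $\lam$ range over $(0,\infty)$ to recover the supremum/infimum over~$t$. Since the scaling $u\mapsto \lam^{1/2}u(\lam^{1/N}\cdot)$ dilates $\|u\|_2^2$ and $\|(-\De)^{s/2}u\|_2^2$ by exactly the same factors as in the local case (with $2$ replacing $p$ and $s$ playing the role of the order of the derivative), all the algebraic identities carry over verbatim; no new estimate is needed here and no rearrangement is invoked.

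The second step is to record the fractional version of Lemma~\ref{subrelation}: $S_{N,s,\ga,\beta,q}$ is attained on the constraint surface if and only if $\sup_{t>0}\tilde f_{N,s,\ga,\beta,q}(t)$ is attained on $(0,\infty)$. The ``only if'' direction follows from the first inequality obtained in step one (the functional is bounded above by $\tilde f$ evaluated at the seminorm ratio of the maximizer). The ``if'' direction uses the equality $J(v_\lam)=\tilde f_{N,s,\ga,\beta,q}(\lam^{\ga/N}\|(-\De)^{s/2}v_{s,*}\|_2^\ga/\|v_{s,*}\|_2^\ga)$ with $\lam$ chosen so that this argument equals the unique (or any) point where $\tilde f$ attains its supremum; here it is crucial that $v_{s,*}\in H^s(\R^N)$, which holds in the subcritical case regardless of the size of $s$ relative to $N/4$.

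The third and final step is the elementary one-variable analysis of $\tilde f_{N,s,\ga,\beta,q}$ and $\tilde g_{N,s,q,\ga}$, split according to whether $\ga>\ga_{s,q}$, $\ga=\ga_{s,q}$, or $\ga<\ga_{s,q}$. In case~(1), $\lim_{t\to 0}\tilde g_{N,s,q,\ga}(t)=0$, giving $\beta_{N,s,q}(\ga)=0$; differentiating $\tilde f$ and reducing the sign to a strictly monotone auxiliary function (as in Lemma~\ref{case1}) produces a unique critical point at which the maximum is attained. In case~(2), convexity of $h(t)=(1+t)^{q/\ga_{s,q}}-(1+t)^{(q-2)/\ga_{s,q}}$ on $[0,\infty)$ yields the value $\beta_{N,s,q}(\ga_{s,q})=2/(\ga_{s,q}\,\mathcal S_{N,s,q})$, after which the sign analysis of $\tilde f'$ mirrors the proof of part~(2) of Theorem~\ref{IshiwataWadade}. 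In case~(3), $\tilde g_{N,s,q,\ga}(t)\to\infty$ as $t\to 0^+$ and $t\to\infty$ (because $\ga<\ga_{s,q}<q$), so $\beta_{N,s,q}(\ga)$ is attained at an interior point $t_1$, and the usual comparison argument decides attainment of $\tilde f$ according to the sign of $\beta-\beta_{N,s,q}(\ga)$.

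I do not expect any step to present a serious obstacle: the argument is essentially a transcription of the local proof, and the only structural ingredients it relies on are the existence of a maximizer for \eqref{eq:fractionalGNS} in $H^s(\R^N)$ and the scaling identities for $\|u\|_2$, $\|u\|_q$, $\|(-\De)^{s/2}u\|_2$ under $u\mapsto \lam^{1/2}u(\lam^{1/N}\cdot)$, both of which are standard. The mildest subtle point is to verify that the one-variable functions $\tilde f$ and $\tilde g$ have \emph{exactly} the same algebraic form as in the local case once one substitutes $p=2$ and uses the fractional GNS constant; once that is checked, the case analysis transports without modification.
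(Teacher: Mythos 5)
Your proposal follows essentially the same route as the paper, which itself only sketches this proof by establishing the one-variable formulas for $S_{N,s,\ga,\beta,q}$ and $\beta_{N,s,q}(\ga)$ and then transporting the case analysis from Theorem \ref{IshiwataWadade}. The only minor imprecision is that under $u\mapsto\lam^{1/2}u(\lam^{1/N}\cdot)$ the seminorm scales as $\|(-\De)^{s/2}u_\lam\|_2^\ga=\lam^{\ga s/N}\|(-\De)^{s/2}u\|_2^\ga$ (exponent $s/N$ rather than $1/N$), but since $\lam\mapsto\lam^{\ga s/N}$ still sweeps out all of $(0,\infty)$ this changes nothing in the argument.
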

In the critical case $q = 2_s^*$, we have $\ga_{s,2_s^*} = 2_s^*$. In this case, we denote $\mathcal S_{N,s,2_s^*}$, $\beta_{N,s,2_s^*}$ and $S_{N,s,\ga,\beta,2_s^*}$ by $\mathcal S_{N,s}$, $\beta_{N,s}$ and $S_{N,s,\ga,\beta}$ respectively for simplifying notation. We have the following results.
\begin{theorem}\label{critical}[critical case $q = 2_s^*$]
Let $N \geq 1$, $s\in (0, N/2)$ and $\beta, \ga > 0$. If $s \geq N/4$ then $S_{N,s,\ga,\beta}$ is not attained. If $s < N/4$, then the following results hold.
\begin{description}
\item (1) If $\ga > 2_s^*$ then $\beta_{N,s}(\ga) =0$ and $S_{N,s,\ga,\beta}$ is attained for any $\beta >0$.
\item (2) If $\ga =2_s^*$ then $\beta_{N,s}(2_s^*) = 2/(2_s^*\, \mathcal S_{N,s})$ and $S_{N,s,\ga,\beta}$ is attained for any $\beta > \beta_{N,s}(2_s^*)$ while it is not attained for any $\beta \leq \beta_{N,s}(2_s^*)$. Moreover, if $\beta \leq \beta_{N,s}(2_s^*)$ then $S_{N,s,\ga,\beta} =1$.
\item (3) If $2 < \ga < 2_s^*$ then $S_{N,s,\ga,\beta}$ is attained for any $\beta \geq \beta_{N,s}(\ga)$ while it is not attained for any $\beta < \beta_{N,s}(\ga)$. Moreover, if $\beta \leq \beta_{N,s}(\ga)$ then $S_{N,s,\ga,\alpha} =1$. 
\item (4) If $\ga \leq 2$ then $\beta_{N,s}(\ga) =\mathcal S_{N,s}^{-1}$ and $S_{N,s,\ga,\beta}$ is not attained for any $\beta >0$. In this case, we have
\[
S_{N,s,\ga,\beta} = \max\{1, \beta \,\mathcal S_{N,s}\}.
\]
\end{description}
\end{theorem}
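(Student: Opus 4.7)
The plan is to mimic the proof of Theorem \ref{Maximizerscritical} step by step, with the sharp fractional Sobolev inequality $\|u\|_{2_s^*}^{2_s^*} \leq \mathcal{S}_{N,s}\|(-\Delta)^{s/2}u\|_2^{2_s^*}$ replacing \eqref{eq:Sobolevinequality}, the exponent $2$ playing the role of $p$, the critical exponent $2_s^*$ playing the role of $p^*$, and the constant $\mathcal{S}_{N,s}$ playing the role of $S_{N,p}^{p^*}$ (note the power: the sharp constant here is already normalized for the $2_s^*$-power of the norm). The first step is to establish the fractional analogue of Theorem \ref{formulacritical}, namely
\begin{equation*}
S_{N,s,\ga,\beta} = \sup_{t > 0} \frac{(1+t)^{\frac{2_s^*-2}\ga} + \beta\, \mathcal{S}_{N,s}\, t^{\frac{2_s^*}\ga}}{(1+t)^{\frac{2_s^*}\ga}} =: \sup_{t>0} f_{N,s,\ga,\beta}(t),
\end{equation*}
together with the corresponding formula for $\beta_{N,s}(\ga)$ in terms of $g_{N,s,\ga}(t) = (1+t)^{(2_s^*-2)/\ga}((1+t)^{2/\ga}-1)/t^{2_s^*/\ga}$. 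The upper bound is immediate from the sharp fractional Sobolev inequality inserted into the definition of $S_{N,s,\ga,\beta}$; for the matching lower bound I will build a test family by truncating $u_{s,*}$: choose $\varphi\in C_0^\infty(\R^N)$ with $\varphi\equiv 1$ on $B_1$ and $\varphi\equiv 0$ outside $B_2$, set $u_{s,*,R}(x)=u_{s,*}(x)\varphi(x/R)$, and then rescale via
\begin{equation*}
w_{R,\lam}(x) = \frac{\lam^{1/2}\, u_{s,*,R}(\lam^{1/N} x)}{\bigl(\|u_{s,*,R}\|_2^\ga + \lam^{\ga/N} \|(-\Delta)^{s/2} u_{s,*,R}\|_2^\ga\bigr)^{1/\ga}}.
\end{equation*}
Once $\|u_{s,*,R}\|_{2_s^*}/\|(-\Delta)^{s/2} u_{s,*,R}\|_2 \to \mathcal{S}_{N,s}^{1/2_s^*}$ as $R \to \infty$, the algebra that produced \eqref{eq:bbbb} transfers verbatim and yields the lower bound after letting the approximation parameter go to $0$.

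Next, I will prove the fractional analogue of Lemma \ref{nonexistence}: when $s \geq N/4$ the extremizer $u_{s,*}$ fails to lie in $H^s(\R^N)$ and the fractional Sobolev inequality is not attained in $H^s(\R^N)$, so the Sobolev step in the estimate of $S_{N,s,\ga,\beta}$ is strict, forcing non-attainability for every $\ga,\beta>0$. For $s < N/4$ one has $u_{s,*} \in H^s(\R^N)$ with equality in the fractional Sobolev inequality; the proof of Lemma \ref{relationcritical} then carries over word for word and establishes the key equivalence: $S_{N,s,\ga,\beta}$ is attained in the constraint set if and only if $\sup_{t>0} f_{N,s,\ga,\beta}(t)$ is attained on $(0,\infty)$.

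With this equivalence, cases (1)--(4) reduce to the one-variable calculus problems already solved in Proposition \ref{chanduoiDcritical} and Lemmas \ref{kotontai}--\ref{case4}, because $f_{N,s,\ga,\beta}$ and $g_{N,s,\ga}$ are obtained from $f_{N,p,\ga,\al}$ and $g_{N,p,\ga}$ by the substitution $(p,p^*,\al S_{N,p}^{p^*}) \mapsto (2,2_s^*,\beta\mathcal{S}_{N,s})$ and therefore possess identical monotonicity, convexity, and asymptotic behaviour. In particular, the threshold identities $\beta_{N,s}(2_s^*) = 2/(2_s^*\mathcal{S}_{N,s})$ and $\beta_{N,s}(\ga)=\mathcal{S}_{N,s}^{-1}$ for $\ga\le 2$, the attainability range $\beta\ge\beta_{N,s}(\ga)$ in case (3), the non-attainability when $\beta<\beta_{N,s}(\ga)$ together with $S_{N,s,\ga,\beta}=1$, and the closed form $S_{N,s,\ga,\beta}=\max\{1,\beta\mathcal{S}_{N,s}\}$ in case (4) (obtained from strict convexity of $s\mapsto (1-s)^{2/\ga}+\beta\mathcal{S}_{N,s}s^{2_s^*/\ga}$ on $[0,1]$) all follow directly from the corresponding statements in Section 2.

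The main technical obstacle is the truncation step in the non-local setting: the identity
\begin{equation*}
\lim_{R\to\infty} \|(-\Delta)^{s/2}(u_{s,*}\varphi(\cdot/R))\|_2 = \|(-\Delta)^{s/2} u_{s,*}\|_2,
\end{equation*}
which was trivial in the local case via integration by parts, is subtle when $u_{s,*}\notin L^2(\R^N)$ (i.e.\ when $s\ge N/4$), since the operator $(-\Delta)^{s/2}$ does not localize like the gradient. I plan to handle this either via the Gagliardo--Slobodeckij representation of $\|(-\Delta)^{s/2}v\|_2^2$ as a double integral of $|v(x)-v(y)|^2/|x-y|^{N+2s}$, which renders the effect of a smooth cutoff transparent and amenable to dominated convergence, or via direct Fourier analysis using the explicit expression of $\widehat{u_{s,*}}$. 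Once this truncation lemma is established, the remainder of the argument is a routine transcription of Section 2.
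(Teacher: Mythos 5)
Your proposal is correct and follows essentially the same route as the paper, which itself only sketches the argument by transcribing the proofs of Theorems \ref{formulacritical} and \ref{Maximizerscritical} with $(p,p^*,\al S_{N,p}^{p^*})$ replaced by $(2,2_s^*,\beta\mathcal S_{N,s})$; you correctly identify the one genuinely new technical point (convergence of $\|(-\De)^{s/2}(u_{s,*}\varphi(\cdot/R))\|_2$ under truncation), which the paper does not discuss. Note also that this truncation lemma can be bypassed entirely for the lower bound: since $\mathcal S_{N,s}$ is defined in \eqref{eq:fractionalGNS} as a supremum over $H^s(\R^N)$, near-maximizers in $H^s(\R^N)$ exist by definition and can be rescaled directly, with the truncation of $u_{s,*}$ needed only to identify $\mathcal S_{N,s}$ with Lieb's constant and to settle attainability according to whether $u_{s,*}\in H^s(\R^N)$.
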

The proof of Theorem \ref{sub} and Theorem \ref{critical} follows the same way in the proof of Theorem \ref{IshiwataWadade} and Theorem \ref{Maximizerscritical}. We first can show that
\[
S_{N,s,\ga,\beta,q} = \sup_{t > 0}\, \frac{(1+ t)^{\frac{q-2}\ga} + \beta\, \mathcal S_{N,s,q} t^{\frac{\ga_{s,q}}\ga}}{(1+ t)^{\frac q\ga}} =: \sup_{t> 0}\, \overline f_{N,s,\ga,\beta,q}(t),
\]
and
\[
\beta_{N,s,q}(\ga) = \frac1{\mathcal S_{N,s,q}}\, \inf_{t >0} \, \frac{(1+t)^{\frac{q-2}\ga}((1+t)^{\frac2\ga} -1)}{t^{\frac{\ga_{s,q}}\ga}} = \frac1{\mathcal S_{N,s,q}}\, \inf_{t>0}\, \overline g_{N,s,\ga,q}(t).
\]
The functions $\overline f_{N,s,\ga,\beta,q}$ and $\overline g_{N,s,\ga,q}$ have the same properties of the functions $f_{N,p,\ga,\al,q}$ and $g_{N,p,\ga,q}$ above (also $f_{N,p,\ga,\al}$ and $g_{N,p,\ga}$) respectively. So we can use again the arguments in the proof of Theorem \ref{IshiwataWadade} and Theorem \ref{Maximizerscritical} to prove Theorem \ref{sub} and Theorem \ref{critical}. 

As a consequence, we obtain the existence results for solutions in $H^s(\R^N)$ of several non-local elliptic type equations
\[
\|(-\Delta)^{\frac s2}u\|_2^{\ga -2} (-\Delta)^s u + \lt(\|u\|_2^{\ga -2} -\frac2{2\|u\|_2^2 + \beta q \|u\|_q^q}\rt)u = \frac{\beta q}{2\|u\|_2^2 + \beta q \|u\|_q^q} |u|^{q-2} u
\]
under some suitable conditions on $q \in (2,2_s^*]$ and $\beta, \ga >0$. Note that the previous equation is exact the Euler--Lagrange equation with Lagrange multiplier of the problem \eqref{eq:fracLaplace}.

The similar results for the fractional $p-$Laplace operators $(-\De_p)^s$ where $s \in (0,1)$ and $ps < N$ can be proved by the same way. We refer the reader to the paper \cite{brasco} and references therein for the definition and properties of the operator $(-\De_p)^s$. The fractional Sobolev inequality and fractional Gagliardo--Nirenberg--Sobolev inequality related to $(-\De_p)^s$ are well known \cite{DiNezza,Mazya}. The existence of maximizers for these inequalities can be proved by using the concentration--compactness principle of Lions \cite{PL85I,PL85II}. The optimal estimate for the maximizers of the fractional Sobolev inequality was recently proved in \cite{brasco}. These ingredients are enough to apply our method used to prove Theorem \ref{IshiwataWadade} and Theorem \ref{Maximizerscritical} to establish the similar results for $(-\De_p)^s$. We leave the details for interesting reader.

\section*{Acknowledgments}
This work was supported by the CIMI's postdoctoral research fellowship.

\end{document}